\newtheorem{thm}{\bf Theorem}
\newenvironment{theorem}{\begin{thm}} {\end{thm}}
\newtheorem{cor}{\bf Corollary}
\newtheorem{lmm}{\bf Lemma}
\newenvironment{lemma}{\begin{lmm}}{\end{lmm}}
\def \epsilon {{\varepsilon}}
\newtheorem{remark}{Remark}[section]
\begin{document}
%
\title{$L^0$-regularized Variational Methods for Sparse Phase Retrieval}

\author{
Yuping Duan, Chunlin Wu,
Zhi-Feng Pang,
Huibin Chang${}^*$
\thanks{Y. Duan is with Center for Applied Mathematics, Tianjin University, Tianjin, China.}
\thanks{C. Wu is with School of Mathematical Sciences, Nankai University, Tianjin, China.}
\thanks{Z.-F. Pang is with Department of Mathematics and Statistics, Henan University, Kaifeng, China
}
\thanks{${}^*$Corresponding author. H. Chang is  with Department of Mathematical Sciences, Tianjin Normal University, Tianjin, China, e-mail:changhuibin@gmail.com.
}
%

}

%

%
%

\markboth{Journal of \LaTeX\ Class Files,~Vol.~14, No.~8, August~2015}%
{Shell \MakeLowercase{\textit{et al.}}: Bare Demo of IEEEtran.cls for IEEE Journals}
%



\maketitle

\begin{abstract}
We study the problem of recovering the underlining sparse signals from clean or noisy phaseless measurements. Due to the  sparse \emph{prior} of signals, we adopt an $L^0$ regularized variational model to ensure only a small number of nonzero elements being recovered in the signal and two different formulations are established  in the modeling based on the choices of data fidelity, \emph{i.e.}, $L^2$ and $L^1$ norms. We also propose efficient algorithms based on the Alternating Direction Method of Multipliers (ADMM)  with convergence guarantee and nearly optimal computational complexity. Thanks to the existence of closed-form solutions to all subproblems, the proposed algorithm is very efficient with low computational cost in each iteration. Numerous experiments show that our proposed methods can recover sparse signals from phaseless measurements with higher successful recovery rates and lower computation cost compared with the state-of-art methods.
\end{abstract}

\begin{IEEEkeywords}
 Phase retrieval, $L^0$ regularization, Alternating Direction Method of Multipliers (ADMM), Dynamic Step,  Sparse Signals
\end{IEEEkeywords}

%
\IEEEpeerreviewmaketitle

\section{Introduction}\label{intro}
\IEEEPARstart{P}{hase} retrieval (PR) is known as the recovery of an underling signal from the magnitude of its Fourier transform \cite{patterson1934fourier,patterson1944ambiguities}. It has a wide applications in many areas of engineering and applied physics, including optical imaging \cite{walther1963question}, X-ray crystallography \cite{millane1990phase}, astronomical imaging \cite{fienup1987phase}, \emph{etc.}

Let $\bm x \in \mathbb{C}^{N}$ be a one-dimensional sampled unknown object. We mathematically set up the phase retrieval problem in a discretized environment as
\begin{equation}
\label{phase_retrieval}
\begin{split}
\mbox{\qquad To find} \quad &\bm x \\
\mbox{s.t.,}\quad~ & |\mathcal{F}\bm x|=b,
\end{split}
\end{equation}
where $b\in\mathbb R_+^N$ is a sampled measurement, $\mathcal{F}$ is the Discrete Fourier Transform (DFT) and $|\cdot|$ denotes the componentwise absolute value.
More generally, we can consider PR with an arbitrary linear operator $\mathcal A:\mathbb R^N\rightarrow\mathbb R^{\hat N}$ rather than DFT form of the phaseless measurements, where the data $b\in \mathbb R_+^{\hat N}$  are collected \footnote{For simplicity we still use the notation $b$ to represent the collected measurements} as below
  \[
  b=|\mathcal A \bm x|
  .\]


\subsection{Related Literatures}
Generally speaking, PR is  an ill-posed and challengeable quadratic  optimization problem, since there are possibly many nontrivial solutions  \cite{Sanz1985} and solving  a general quadratic problem is NP hard \cite{pardalos1991quadratic}. From algorithmic perspective, the alternating projections was pioneered by Gerchberg and Saxton \cite{gerchberg1972practical} to reconstruct
phase from two intensity measurements. Such kind of algorithms by alternating projections  has been further studied for PR with additional support set and positivity  constraint in \cite{fienup1982phase,elser2003phase,marchesini2013augmented,Luke2005,Marchesini2007}. More advanced alternating projection algorithms can be found in the review paper by Marchesini \cite{marchesini2007invited}. However, since the projections are onto non-convex sets, the iterative algorithms often fall into a local minimum \cite{bauschke2002phase}. Gradient descent schemes with adaptive steps were exploited for general PR \cite{candes2015phaseWF,chen2015solving}. Recently, insensitive studies focused on the algorithms with theoretically global convergence for the general PR. For instance, a global convergence for  Gaussian measurements was  analyzed by Netrapalli, Jain and  Sanghavi \cite{netrapalli2015phase}.  The convergence under a \emph{generic} frame  was proved by Marchesini \emph{et al.}  \cite{marchesini2015alternating}, and phase-synchronization  was used to accelerate the computation speed for large scale ptychographic PR. A second order trust-region algorithm was proposed by Sun, Qu, and Wright \cite{sun2016geometric} based on the geometric analysis of PR, which returned a solution sufficient close to the ground truth by collecting about $O(N\log^3(N))$ \emph{generic} measurements.  Another branch of methods convexify the PR by semi-definite programming (SDP) based PhaseLift \cite{candes2015phaselift} and PhaseCut \cite{Waldspurger2012}. PhaseMax \cite{goldstein2016phasemax,bahmani2016phase} was developed with much less computational cost, which  operates in the original signal dimension rather than in a higher dimension as SDP based methods.

Inspired by the success of compressed sensing \cite{donoho2006compressed,candes2006robust}, the sparsity \emph{prior} was added to PR problem  in order to increase the recovery possibilities and the quality of reconstructed signals, especially when the measurement is noisy and incomplete. Marchesini \emph{et al.} established a \textsc{shrink-wrap} algorithm  in \cite{marchesini:2003} by generalizing the regression model, where  the support of unknown signals are  slowly shrinked as iteration goes on. Similar methods based on soft thresholdings were studied by  Moravec, Romberg and Baraniuk \cite{moravec2007compressive}.  The SDP-based methods have also been applied to solve the sparse PR problems \cite{ohlsson2012cprl,li2013sparse}. 
Direct extension of the Fieup's method \cite{Fienup1982} was proposed with an additional sparsity constraint of the $L^0$ pseudo norm \cite{mukherjee2014fienup}. The $L^1$ regularization based variational method \cite{yang2013robust} was applied to the classical PR problem. Shechtman, Beck and  Eldar presented an efficient local search method for recovering a sparse signal for the classical PR \cite{shechtman2014gespar}. A  probabilistic method  based on the generalized approximate message passing algorithm was proposed and studied in \cite{schniter2015compressive}.   The shearlet and total variation  regularization methods  in \cite{loock2014phase,chang2016phase,chang2016} assumed the objects possess a sparse representation in the transform domain. Dictionary learning methods were proposed with the dictionary automatically learned from the redundant image patches \cite{tillmann2016dolphin,qiu2016undersampled}.  A general framework was proposed \cite{chang2016general} by combining the popular image filters as BM3D to denoise phaseless measurements.

On the other hand, people also studied the minimal measurements to recover $s-$sparse\footnote{Size of support set or number of nonzeros is $s$} signals. Ohlsson and Eldar \cite{ohlsson2014conditions} showed that at least $s^2-s+1$ Fourier measurements are sufficient for such signals.   Eldar and Mendelson \cite{eldar2014phase} proved that  for the $s-$sparse real-valued signals, $O(s\log(N/s)$ measurements are sufficient
for stable uniqueness. Xu and Wang \cite{wang2014phase} studied  the null
space property for $s-$sparse signals and showed that at least $2s$ and $4s-2$ measurements are sufficient assuming the rows of $\mathcal A$ being \emph{generically} chosen from $\mathbb R^N$ and $\mathbb C^N$ for the real and complex valued signals respectively. More related studies can be found in \cite{Shechtman2014} and references therein.



\subsection{Our Contribution}
In this paper, our discussion is based on the assumption that the signal is sparse in the original domain and may be contaminated by noises. In order to find the solution $\bm x$ stated in \eqref{phase_retrieval}, we set up a better-conditioned problem based on the regularization. More specifically, we introduce the $L^0$ pseudo norm as the regularization for $\bm x$ due to its sparsity \emph{prior} and propose the following minimization problem
\begin{equation}
\label{pr_constrained}
\begin{split}
\min_{\bm x} &~~\|\bm x \|_0\\
\mbox{s.t.,} &~~|\mathcal{A} \bm x|\approx b,
\end{split}
\end{equation}
where $$\|\bm x\|_0:=\#\left\{k\big| |\Re(\bm x)_i|+|\Im(\bm x)_i|\not=0, ~~i = 1, \cdots, N\right\}.$$
The constraint in \eqref{pr_constrained} is realized by minimizing the distance between the observed magnitude $b$ and the value $|\mathcal{A} \bm x|$, which is measured by $L^p$ distance, $p=1 \mbox{~and~} 2$. We design algorithms for the established non-convex optimization problem \eqref{pr_constrained} based on the operator splitting technique,  and  an alternating direction method of multipliers (ADMM) \cite{Wu&Tai2010} with dynamic steps is adopted for detailed implementation. Since each subproblem has the closed-form solution, the proposed algorithms are very efficient with low computational costs.
Note that our model does not need to know the support information of the signal in advance and is robust when $b$ is contaminated by noises as \cite{mukherjee2014fienup}. Numerous experiments demonstrate that  our proposed methods are capable of recovering signals with higher probability for noiseless measurements  and higher SNRs for noisy measurements compared with the state-of-art algorithms.
\subsection{Outline}

The rest of this paper is organized as follows. In Section \ref{sec2},  the $L^0$ regularized models are established, where the existence of the minimizers to the proposed models is obtained as well. Section \ref{sec3} discusses the fast numerical algorithms for solving the proposed models, and local convergence is provided as well. Experiments are performed in Section \ref{sec4} to  demonstrate the effectiveness and robustness of the proposed methods from clean and noisy  phaseless measurements for sparse signals. Conclusions and future works are given in Section \ref{sec6}.


\section{$L^0$-regularized Sparse PR Model}\label{sec2}
We first give a brief review of two state-of-art algorithms, \emph{i.e.,} Sparse Fieup algorithm and GESPEAR (a greedy sparse phase retrieval algorithm), and one can refer to \cite{Shechtman2014} for more related methods for sparse PR problems.
\subsection{Review of Sparse Fieup and GESPEAR}
\subsubsection{Sparse Fieup algorithm}
Mukherjee and  Seelamantula \cite{mukherjee2014fienup} proposed to solve a sparse PR problem (SPR), which is formulated as
\begin{equation}\label{SPR}
\text{To find~} \bm x, ~\mbox{s.t.},~ \bm x\in \mathcal C_s\cap\mathcal M,
\end{equation}
where
\[\mathcal C_s:=\{\bm x: \|\bm x\|_0\leq s\},\] and
\[
\mathcal M:=\{\bm x: |\mathcal F \bm x|=b\},
\]
with  the known $s$  nonzeros.
An alterative projection algorithm  was proposed as
\[
\bm x^{k+1}=\mathcal P_{\mathcal C_s} (\mathcal P_{\mathcal M}(\bm x^k))
\]
for the $k^{th}$ iteration, where  the projection operator is denoted as
 \[
\mathcal P_{\mathcal A}(\bm x):=\arg\min_{\bm y\in \mathcal A} \|\bm y-\bm x\|^2.
\]

\begin{remark}
In \cite{mukherjee2014fienup}, they proposed a more general models, where the set
 \[\hat C_s:=\{\bm x: x=\bm \Upsilon y, \|y\|_0\leq s\}\]
 such that the signal is sparse by  representing by some linear basis $\bm\Upsilon$. In order to solve the projection $\mathcal P_{\hat {\mathcal C}_s}$, the OMP algorithm was used. In this paper, we only consider the sparsity in the signal domain. Obviously for noisy data, one can not require the underling signal belongs to the set $\mathcal M.$
\end{remark}

\subsubsection{GESPAR}
 Shechtman, Beck and Eldar \cite{shechtman2014gespar} proposed to solve the following optimization problem
 \[
 \begin{split}
 \min\limits_{\bm x\in \mathbb R^n}&\quad\||\mathcal F \bm x|^2-b^2\|,\\
 \mbox{s.t.}, &\quad \|\bm x\|_0\leq s,\\
 &\quad \mathrm{supp}(x)\subseteq \{1,2,\cdots,N\}.
 \end{split}
 \]
By introducing the  bound  of the support set  $J_1, J_2$,
a quadratic optimization problem was established as follows
\[
 \begin{split}
 \min\limits_{\bm x\in \mathbb R^n}&\quad \sum\limits_{i=1}^{N} (\bm x^T D_i \bm x-b_i^2)^2,\\
 \mbox{s.t.}, &\quad \|\bm x\|_0\leq s,\\
 &\quad J_1\subseteq\mathrm{supp}(x)\subseteq J_2,
 \end{split}
\]
 with $D_i=\Re(\mathcal F_i)^T\Re(\mathcal F_i)+\Im(\mathcal F_i)^T\Im(\mathcal F_i)$.
A restarted version of 2-opt method was designed to solve it, which consists of two steps, \emph{i.e.,} local search method to update the current support set, and Damped Gauss-Newton (DGN) method to solve a nonconvex optimization problem with the updated support set. In the first step to update the support set, a 2-opt method \cite{papadimitriou1998combinatorial} was used, and each time only two elements including one in the support and another in the off-support  were changed. Normally the 2-opt method can get stuck at local minimums. A weighted version of the objective functional with random generated weights in the above minimization problem solving by DGN was used to increase the robustness.

\subsection{The proposed models}
Both above methods require to know the sparsity, which may limit their applications for real signal recovery. Therefore, based on \eqref{pr_constrained}, we propose a novel variational model for PR without knowing the exact sparsity  (size of support set) in advance, which is stated as follows
\begin{equation}\label{l0lp}
\min\limits_{\bm x} ~~\mathcal G_\lambda(\bm x):=\lambda \|\bm x\|_0+\frac1p\big\| b-|\mathcal{A} \bm x|\big\|_p^p,
\end{equation}
where the parameter $\lambda$ controls the tradeoff between the sparsity and data fidelity, and $L^p$ norm measures the data fidelity ($ p \geq 1$).
%

We can directly establish the existence of the variation problems.
\begin{theorem}
Eqn. \eqref{l0lp} admits at least one minimizer if the general linear mapping $\mathcal A$ is continuous.
\end{theorem}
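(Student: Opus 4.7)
The plan is to apply the direct method of the calculus of variations, with one twist. The subtlety is that $\mathcal G_\lambda$ need not be coercive in $\bm x$ when $\mathcal A$ has a nontrivial kernel, so I cannot hope to extract a convergent subsequence from an arbitrary minimizing sequence in $\mathbb C^N$. The workaround is to extract convergence of $\mathcal A \bm x^k$ in the (closed) finite-dimensional image space, and then pull back to an admissible minimizer, using the flexibility built into the $L^0$ term.

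First, since $\mathcal G_\lambda \geq 0$, the infimum $m := \inf_{\bm x} \mathcal G_\lambda(\bm x)$ is finite. Pick a minimizing sequence $\{\bm x^k\}$ with $\mathcal G_\lambda(\bm x^k) \to m$. The bound $\lambda \|\bm x^k\|_0 \leq \mathcal G_\lambda(\bm x^k)$ shows that $\|\bm x^k\|_0$ is uniformly bounded, and since $\|\cdot\|_0$ takes only finitely many integer values in $\{0,1,\ldots,N\}$, I may pass to a subsequence on which $\mathrm{supp}(\bm x^k) = S$ for a fixed index set $S \subseteq \{1,\ldots,N\}$. Denote by $V_S := \{\bm x : \mathrm{supp}(\bm x) \subseteq S\}$ the corresponding finite-dimensional linear subspace.

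Next, set $\bm y^k := \mathcal A \bm x^k \in \mathcal A(V_S)$. Applying the (reverse) triangle inequality to the $\ell^p$-norm gives
$$\|\bm y^k\|_p = \bigl\||\bm y^k|\bigr\|_p \leq \|b\|_p + \bigl\|b - |\bm y^k|\bigr\|_p \leq \|b\|_p + \bigl(p(m+1)\bigr)^{1/p},$$
so $\{\bm y^k\}$ is bounded. Because $\mathcal A(V_S)$ is finite-dimensional and therefore closed, Bolzano--Weierstrass produces a further subsequence with $\bm y^k \to \bm y^\star \in \mathcal A(V_S)$. Now pick any preimage $\bm x^\star \in V_S$ with $\mathcal A \bm x^\star = \bm y^\star$ (which exists by definition of the image). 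Continuity of $|\cdot|$ and $\|\cdot\|_p^p$, combined with $\|\bm x^\star\|_0 \leq |S|$, then yields
$$\mathcal G_\lambda(\bm x^\star) \leq \lambda |S| + \tfrac1p\bigl\|b - |\bm y^\star|\bigr\|_p^p = \lim_{k\to\infty} \mathcal G_\lambda(\bm x^k) = m,$$
so $\bm x^\star$ attains the infimum.

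The main obstacle is the missing coercivity in $\bm x$: if $\mathcal A|_{V_S}$ has a nontrivial kernel, the level sets of the fidelity term are unbounded in $\bm x$. I surmount this by exploiting the twofold role of the $L^0$ regularizer: it confines minimizing sequences to one of only finitely many linear subspaces $V_S$, and it allows the final bound $\|\bm x^\star\|_0 \leq |S|$ to absorb the possibility that the chosen preimage has support strictly smaller than $S$. The only structural hypothesis actually used is continuity (equivalently, linearity) of $\mathcal A$ on finite-dimensional spaces, matching the theorem's assumption.
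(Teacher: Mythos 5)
Your proof is correct, and it is substantially more complete than the one in the paper. The paper disposes of this theorem in one line, citing only the lower semicontinuity of $\|\cdot\|_0$ and the lower boundedness of $\mathcal G_\lambda$ and declaring the rest routine. Those two ingredients alone do not yield existence on the non-compact domain $\mathbb C^N$: some form of compactness or coercivity is needed, and $\mathcal G_\lambda$ is genuinely non-coercive (the $L^0$ term is bounded by $N$ and the fidelity term is constant along $\ker\mathcal A$), which is exactly the obstruction you identify. Your argument supplies the missing compactness in a clean way: pigeonholing the support of a minimizing sequence onto a fixed set $S$ freezes the $L^0$ term at $\lambda|S|$, and the fidelity term, viewed as a function of $\bm y=\mathcal A\bm x$ on the closed finite-dimensional image $\mathcal A(V_S)$, is coercive there via the reverse triangle inequality, so Bolzano--Weierstrass applies; pulling back any preimage of the limit and using $\|\bm x^\star\|_0\le|S|$ closes the argument. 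Note that you never actually invoke lower semicontinuity of $\|\cdot\|_0$ in the limit (you sidestep it by fixing the support and using the one-sided bound $\|\bm x^\star\|_0\le|S|$), whereas the paper leans on it; both are legitimate, but your route makes explicit where the hard work is. The only cosmetic remarks: the bound $\|b-|\bm y^k|\|_p\le(p(m+1))^{1/p}$ holds for $k$ sufficiently large rather than for all $k$, and the continuity hypothesis on $\mathcal A$ is automatic for linear maps in finite dimensions, as you observe.
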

\begin{proof}
As the $L^0$ pseudo norm is  lower semi-continuous, and the objective functional $\mathcal G_\lambda(\bm x)$ in \eqref{l0lp} is lower bounded, one can readily prove it and we omit the detail here.
\end{proof}

The uniqueness is hardly to guaranteed since it involves a nonconvex optimization problem. However, by letting $\lambda=0$, the uniqueness of the proposed variational model reduces to the uniqueness of the PR problem. As \cite{chang2016}, in the absence of $L^0$ norm, one can prove the uniqueness of the PR problem. However, it is rather challengeable for the 1-D problem \cite{Shechtman2014} only from the Fourier measurements, and there may exist many nontrivial solutions. Thus, as the numerical experiments in section \ref{sec4} showed, the successful recovery rates are not guaranteed to be $100\%$  even when the sparsity is very strong and regularization is used, \emph{i.e.}, based on the minimization problem \eqref{l0lp}. If the uniqueness can not be guaranteed, we can only give the reliability of our proposed models for noisy free measurements in the following theorem.

First, we denote the solution set (or feasible set) for phase retrieval $\mathscr S(b):=\{\bm x\in\mathbb C^N:~|\mathcal A \bm x|=b\}$.
\begin{theorem}
Assume there exists  $\bm x^\star\in \mathscr S(b)\not=\emptyset$ with noisy free measurements $b$ which minimizes \eqref{l0lp}, it is the sparsest among the solution set  $\mathscr S(b)$, \emph{i.e.,} \[\bm x^\star=\arg\min\limits_{\bm x\in \mathscr S(b)} \|\bm x\|_0.\]
\end{theorem}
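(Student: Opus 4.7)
The plan is to argue by contradiction and exploit the fact that on the feasible set $\mathscr S(b)$ the data-fidelity term of $\mathcal G_\lambda$ vanishes identically, so that on this set $\mathcal G_\lambda$ reduces to $\lambda\|\cdot\|_0$ up to a constant. Under this reduction, a minimizer of $\mathcal G_\lambda$ that happens to lie in $\mathscr S(b)$ must automatically minimize $\|\cdot\|_0$ over $\mathscr S(b)$.

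More concretely, I would first record the observation that for every $\bm y\in\mathscr S(b)$ one has $|\mathcal A\bm y|=b$, hence
\[
\tfrac{1}{p}\bigl\|b-|\mathcal A\bm y|\bigr\|_p^p = 0,
\]
so $\mathcal G_\lambda(\bm y)=\lambda\|\bm y\|_0$ for all such $\bm y$. In particular, since $\bm x^\star\in\mathscr S(b)$ by hypothesis, $\mathcal G_\lambda(\bm x^\star)=\lambda\|\bm x^\star\|_0$.

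Next, suppose for contradiction that $\bm x^\star$ is not the sparsest element of $\mathscr S(b)$; then there exists $\tilde{\bm x}\in\mathscr S(b)$ with $\|\tilde{\bm x}\|_0<\|\bm x^\star\|_0$. Applying the above identity to $\tilde{\bm x}$ gives
\[
\mathcal G_\lambda(\tilde{\bm x}) = \lambda\|\tilde{\bm x}\|_0 < \lambda\|\bm x^\star\|_0 = \mathcal G_\lambda(\bm x^\star),
\]
which contradicts the assumed global minimality of $\bm x^\star$ for $\mathcal G_\lambda$ (noting $\lambda>0$ is implicit). Hence $\bm x^\star=\arg\min_{\bm x\in\mathscr S(b)}\|\bm x\|_0$, which is the claim.

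There is essentially no obstacle here; the statement is a soft consequence of the variational formulation together with the noiseless compatibility condition $\bm x^\star\in\mathscr S(b)$. The only thing worth flagging is the tacit assumption $\lambda>0$ (otherwise the regularizer is absent and nothing about sparsity can be concluded), and the implicit understanding that ``minimizes \eqref{l0lp}'' means global minimization; both are consistent with the framing earlier in the excerpt.
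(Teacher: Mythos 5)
Your proof is correct and follows exactly the same route as the paper's one-line argument, namely that $\mathcal G_\lambda(\bm y)=\lambda\|\bm y\|_0$ for every $\bm y\in\mathscr S(b)$, so minimality of $\mathcal G_\lambda$ at $\bm x^\star\in\mathscr S(b)$ forces minimality of $\|\cdot\|_0$ over $\mathscr S(b)$. You merely spell out the contradiction step and the tacit assumption $\lambda>0$, which the paper leaves implicit.
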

\begin{proof}
It can be readily proved since $\mathcal{G}_\lambda(\bm x)=\lambda\|\bm x\|_0$ for $\bm x\in \mathscr S(b)$.
\end{proof}

\begin{remark}
In the above theorem, we assume there exists  a minimizer to \eqref{l0lp} which belongs to the solution set $\mathscr S(b)$.
If such assumption does not hold,  for an arbitrary minimizer \[\tilde {\bm x}_\lambda=\arg\min \mathcal G_\lambda(\bm x),\] one has \[\tilde {\bm x}_\lambda\notin \mathscr S(b),\] i.e., \[\delta:=\frac1p\big\|b-|\mathcal A \tilde{\bm x}_\lambda|\big\|^p_p>0.\]
Then \[
\lambda\|\tilde{\bm x}\|_0+\delta  =\mathcal G_\lambda(\tilde{\bm x})<\mathcal G_{\lambda}(\bm x)=\lambda \|\bm x\|_0,
\quad\forall\bm x\in \mathscr S(b).\]
Therefore one has
\[
\|\tilde{\bm x}\|_0<\min\limits_{\bm x\in \mathscr S(b)}\|\bm x\|_0,
\]
and
\[
0<\delta<\lambda(\min\limits_{\bm x\in \mathscr S(b)}\|\bm x\|_0-\|\tilde{\bm x}\|_0)\leq \lambda n,
\]
 It implies that one can seek a  sparser approximation for the phase retrieval problem by setting the balance parameter $\lambda$ to be smaller enough, \emph{i.e.,} for arbitrary small $\varepsilon$, by selecting the parameter $\lambda\leq \frac{\varepsilon}{n}$, one can find a  sparser minimizer  $\bm x^\star$ to \eqref{l0lp}, which is very close to the solution set
with
\[
\frac{1}{p}\big\|b-|\mathcal A \bm x^\star|\big\|_p^p\leq \varepsilon.
\]
\end{remark}
\vskip .1in
\section{Numerical Algorithm for Proposed Model}\label{sec3}
We construct an alternating direction method  of multipliers (ADMM) with dynamical steps for solving \eqref{l0lp}. First, the minimization problem \eqref{l0lp} is rewritten as a constrained minimization problem by introducing a pair of new variables, which reads
\begin{equation}
\begin{split}
\min\limits_{\bm x,q,\bm z}& \quad\lambda \big\|\bm q\big\|_0+\frac1p\big\| b-|\bm z|\big\|^p_p\\
\mbox{ s.t.,}&\quad\bm x=\bm q,\quad \bm z=\mathcal A \bm x.
 \end{split}
 \label{lpcon}
\end{equation}
Then, the augmented Lagrangian is introduced in order to settle the following saddle-point problem
\begin{equation}\label{eqAl}
\begin{split}
\max\limits_{\bm \Lambda_1,\bm\Lambda_2}\min\limits_{\bm x,\bm q,\bm z}\mathscr L_{r_1,r_2}(\bm x,&\bm q,\bm z;\bm \Lambda_1,\bm\Lambda_2)=\lambda \big\|\bm q\big\|_0+\frac1p\big\| b-|\bm z|\big\|^p_p\\
&+\Re(\langle \bm x-\bm q,\bm\Lambda_1\rangle)+\frac{r_1}{2}\|\bm x-\bm q\|^2\\
&+\Re(\langle \bm z-\mathcal A\bm x ,\bm\Lambda_2\rangle)+\frac{r_2}{2}\|\bm z-\mathcal A\bm x\|^2,
\end{split}
\end{equation}
where $\bm\Lambda_1$, $\bm\Lambda_2$ are the Lagrange multipliers, $r_1$, $r_2$ are  two positive constants, and $\langle\cdot,\cdot\rangle $ represents the $L^2$ inner product.
%

We implement ADMM to solve \eqref{eqAl}, which is sketched as Algorithm \ref{alg1}. In the algorithm, we dynamically update $r_i$, $i=1,2$, which starts from a given value and is multiplied by $\rho$ ($\rho>1$) at each iteration.
\begin{algorithm}
\caption{\label{alg1} ADMM method for solving \eqref{eqAl}}
\textbf{Input:} $b$, parameters $\lambda$, $r_1^0$, $r_2^0$, $r_{max}$, $\rho$ and $n=0$; \\
\textbf{Initialization:} generalize random signals $\bm q^{0}$ and $\bm z^0$;\\
\textbf{Repeat}
\begin{itemize}
\item[(1)]
Solve \[
\bm x^{n+1}=\arg\min\limits_{\bm x}\mathscr L_{r_1^n,r_2^n}(\bm x,\bm q^n,\bm z^n;\bm \Lambda^n_1,\bm \Lambda^n_2);
\]
\item[(2)]
Solve \[
\bm q^{n+1}=\arg\min\limits_{\bm q}\mathscr L_{r_1^n,r_2^n}(\bm x^{n+1},\bm q,\bm z^n;\bm \Lambda^n_1,\bm \Lambda^n_2);
\]
\item[(3)]
Solve \[
\bm z^{n+1}=\arg\min\limits_{\bm z}\mathscr L_{r_1^n,r_2^n}(\bm x^{n+1},\bm q^{n+1},\bm z;\bm \Lambda^n_1,\bm \Lambda^n_2);
\]
\item[(4)]Update the Lagrangian multipliers
\[
\begin{split}
&\bm\Lambda_1^{n+1}=\bm\Lambda_1^n+r_1^n(\bm x^{n+1}-\bm q^{n+1});\\
&\bm\Lambda_2^{n+1}=\bm\Lambda_2^n+r_2^n(\bm z^{n+1}-\mathcal A\bm x^{n+1});\\
\end{split}
\]
\item[(5)] $r_1^{n+1}=\rho r_1^n$, $r_2^{n+1}=\rho r_2^n$, and $n:=n+1$;
\end{itemize}
\textbf{Until $r^{n+1}_1\geq r_{max}$}
\end{algorithm}
\begin{remark}
Note that the unknown $\bm x$ can only be found up to trivial degeneracies: time-shift, conjugate-flip and global phase-change since these operations on the signal do not affect  related autocorrelation. Therefore, we apply corresponding corrections to  trivial solutions in the numerical experiments.
\end{remark}
\subsection{Sub-minimization problems}
In the following, we discuss the solution to each sub-minimization problem.
\subsubsection{Sub-minimization problem with respect to $\bm x$}
The sub-minimization problem of $\bm x$ can be written as
\begin{equation}
\begin{split}
\min_{\bm x} ~ \Re(\langle\bm x, \bm\Lambda_1\rangle) &+\frac{r_1}2\|\bm x-\bm q\|^2\\
&-\Re(\langle\mathcal{A}\bm x, \bm\Lambda_2\rangle)+\frac{r_2}2\|\bm z-\mathcal{A} \bm x\|^2.
\end{split}
\label{sub_p}
\end{equation}
We shall compute the first order optimal condition for the above problem w.r.t  the complex variable $\bm x$. By separating the real and complex parts of the complex-valued variable as \cite{chang2016}, we are readily to obtain the following Euler-Lagrange equation of \eqref{sub_p}
\begin{equation}
(r_1\mathcal I+r_2\mathcal A^*\mathcal A) \bm x=r_1\bm q +r_2\mathcal{A}^*\bm z +\mathcal{A}^*\bm\Lambda_2-\bm\Lambda_1,
\label{el-x}
\end{equation}
where $\mathcal I$ is the identical operator if
\begin{equation}
\Im(\mathcal A^*\mathcal A)=0.
\label{adj}
\end{equation}
Obviously, there is the closed-form solution to \eqref{sub_p} for the following two kinds of linear operators satisfying \eqref{adj}.
One is that the operator $\mathcal{A}$ is the normalized discrete Fourier transformation, \emph{i.e.,} $\mathcal A^*\mathcal A=\mathcal I$.
In this case, we have
 \begin{equation}
\bm x=(r_1\bm q +r_2\mathcal{F}^*\bm z +\mathcal{F}^*\Lambda_2-\Lambda_1)/(r_1+r_2).
\label{el-x}
\end{equation}
The other one is when the operator $\mathcal A$ is defined as
\begin{equation}\label{cdp}
\mathcal A u=(\mathcal F (M_1\circ u),\mathcal F (M_2\circ u),\cdots,\mathcal F (M_K\circ u))
\end{equation}
 for coded diffraction pattern (CDP) \cite{candes2015phaseCDP}, where $\circ$ denotes the Hadamard product (a componentwise multiplication). In this case, we have
\[
\bm x=(r_1\bm q +r_2\mathcal{A}^*\bm z +\mathcal{A}^*\Lambda_2-\Lambda_1)/(r_1\mathcal I+r_2\sum\limits_{j=1}^K M^*_j\circ M_j),
\]
where $\cdot/\cdot$ denotes the componentwise division.
\begin{remark}
If the operator $\mathcal A$ does not satisfy \eqref{adj}, one needs to solve  more complicated equations, where the real and complex parts of the complex-valued variable is coupled, and see details in \cite{chang2016}.
\end{remark}

\subsubsection{Sub-minimization problem with respect to $\bm q$}
The sub-minimization problem of $\bm q$ is given as follows
\begin{equation}
\min_{\bm q} ~ \lambda \|\bm q\|_0 -\langle \bm q, \Lambda_1\rangle+\frac {r_1}2 \|\bm x-\bm q\|^2.
\label{sub_q}
\end{equation}

This subproblem is easy to solve because the energy functional \eqref{sub_q} can be spatially decomposed, where the minimization problem w.r.t. each element is performed individually and has a closed-form solution. Similar to \cite{duan2015regularized}, we can derive the optimal solution of $\bm q$ as follows
\begin{equation}
\bm q_i = \left \{
\begin{split}
 &0, \qquad\qquad \mbox{for~}i\in \left\{i:\Big(\bm x_i+\frac{(\bm\Lambda_1)_i}{r_1}\Big)^2\leq\frac{2\lambda}{r_1}\right\},
 \\
 &\bm x_i+\frac{(\bm\Lambda_1)_i}{r_1}, \qquad~ \mbox{otherwise}.
\end{split}
\right . \label{sol_q}
\end{equation}

\subsubsection{Sub-minimization problem with respect to $\bm z$}
The sub-minimization problem of $\bm z$ is given as follows
\begin{equation}
\min_{\bm z}\frac1p\big\|b-|\bm z|\big\|^p_p + \langle \bm z, \Lambda_2\rangle + \frac{r_2}2\|\bm z-\mathcal{A}\bm x\|^2.
\label{sub_z}
\end{equation}

We discuss the solution of $\bm z$ in two-fold based on the choices of $p$, \emph{i.e.,} $p=1$ and $p=2$, respectively.

\textbf{Case I:} The data fidelity is measured by $L^2$ norm

When $p=2$, the minimization problem \eqref{sub_z} becomes
\begin{equation}
\min_{\bm z}~\Big\{\frac12\big\|b-|\bm z|\big\|^2  + \frac{r_2}2\|\bm z- \bm W\|^2\Big\},
\label{sub_z_l2}
\end{equation}
where $\bm W = \mathcal{A}\bm x-\frac{{\Lambda }_{2}}{r_2}$. Similarly to \cite{wu2011augmented}, we use a geometric interpretation of the minimization problem \eqref{sub_z_l2} in Fig. \ref{geometric_interpretation}. It is shown that the minimizer $\bm z^\star$ is on the line $OW$, and each entry of the optimal value is with unit direction \[\mathrm{sign}(\bm W_i):=\dfrac{\bm W_i}{|\bm W_i|}.\]
\begin{figure}[!htb]
\begin{center}
\setcounter{subfigure}{0}
{\includegraphics[scale=0.5]{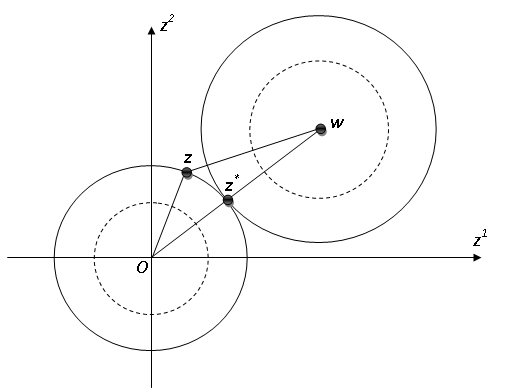}}
\caption{A geometric interpretation of the minimization problem \eqref{sub_z_l2}.} \label{geometric_interpretation}
\end{center}
\end{figure}
Therefore, the minimization procedure can be expressed as follows w.r.t. the length of each entry $k_i:=|\bm z_i|$
\begin{equation}
\min_{k_i\geq 0} ~\Big\{\frac12 \big(b_i-k_i|\bm W_i|\big)^2 + \frac{r_2}2 (1-k_i)^2|\bm W_i|^2\Big\}.
\label{min_k}
\end{equation}

The optimal solution of $k_i$ depends on the relationship of $b_i$ and $|\bm W_i|$, which is given as follows
\begin{equation}
k_i^\star=
\max\Big\{0, ~\frac{b_i+ r_2|\bm W_i|}{(1+r_2)\bm |\bm W_i|}\Big\},
\label{sol_z}
\end{equation}
which is degenerated when $|\bm W_i| = 0$. In this case, we can compute the optimal $k_i$ from
\begin{equation}
\min_{k_i\geq 0}~ \Big\{\frac12 \big(b_i - k_i\big)^2 + \frac{r_2}2 |k_i|^2\Big\},
\label{z_norm_problem}
\end{equation}
which gives us \[k_i^\star = \frac{b_i}{1+r_2}.\]

\textbf{Case II:} The data fidelity is measured by $L^1$ norm

When $p=1$, the minimization problem w.r.t. $\bm z$ becomes
\begin{equation}
\bm z^\star=\arg\min_{\bm z}~\Big\{\big\|b-|\bm z|\big\|  + \frac{r_2}2\|\bm z- \bm W\|^2\Big\}.
\label{sub_z_l1}
\end{equation}
Referred to Fig. \ref{geometric_interpretation}, the minimizer $\bm z^\star$ is still on the line $OW$. We pursue $\bm z^\star$ through solving the following minimization problem w.r.t. the length  $k_i$
\begin{equation}
\min_{k_i\geq 0}~\Big\{\big|b_i-k_i|\bm W_i|\big| + \frac{r_2}2 (1-k_i)^2|\bm W_i|^2\Big\}.
\label{min_k_l1}
\end{equation}
By letting $t= k\circ|\bm W|-b$, we have the minimization problem of $t$ as follows
\begin{equation}
\min~~\epsilon_i(t)= \Big\{|t_i| + \frac{r_2}2 (t_i - R_i)^2\Big\},~~\mbox{s.t.,}~ t_i\geq -b_i,
\label{min_t}
\end{equation}
where $R_i = |\bm W_i|-b_i$.

In fact, there exists the closed-form solution to the above minimization problem according to the following lemma.
\begin{lemma}
 \emph{Assume that}
 \begin{equation}
 y^\star = \arg \min_{y\geq y_1} \epsilon(y):=|y| + \frac{r}2 (y-y_0)^2~\mbox{with~}y_0\in\mathbb R.
 \label{min_l1_con}
 \end{equation}
 \emph{We have}
 \begin{eqnarray}
y^\star  =
\left \{
\begin{array}{ll}
 \hat y, & \text{if}~~y_1 \leq \hat y_0,
 \\
 y_1, & \text{if}~~ y_1 > \hat y_0~~\text{and}~~ y_0 \geq 0,\\
 0, & \text{if}~~ y_1 > \hat y_0~~\text{and}~~ y_0 < 0,
\end{array}
\right .
\label{min_l1_sol}
\end{eqnarray}
\emph{where $\hat y_0 = \mbox{sign}(y_0)\mbox{max} \{|y_0|-\frac1r,0\}.$}
\label{lemma1}
\end{lemma}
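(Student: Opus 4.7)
My plan is to reduce \eqref{min_l1_con} to a soft-thresholding computation followed by a monotonicity-driven case analysis. The function $\epsilon(y)=|y|+\frac{r}{2}(y-y_0)^2$ is strictly convex on $\mathbb R$ as the sum of a convex term $|y|$ and a strictly convex quadratic, and it is smooth away from the origin with derivative $\mathrm{sign}(y)+r(y-y_0)$, while $\partial\epsilon(0)=[-1,1]-r\,y_0$. Writing the optimality condition $0\in\partial\epsilon(y)$ in the three regimes $y>0$, $y<0$ and $y=0$ yields the standard soft-thresholding formula $\hat y_0=\mathrm{sign}(y_0)\max\{|y_0|-1/r,\,0\}$ for the unconstrained minimiser.

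Strict convexity further implies that $\epsilon$ is strictly decreasing on $(-\infty,\hat y_0]$ and strictly increasing on $[\hat y_0,\infty)$, and this monotonicity is the engine that enforces the constraint $y\geq y_1$. If $y_1\leq \hat y_0$, the unconstrained optimum is already feasible, hence $y^\star=\hat y_0$, settling the first branch of \eqref{min_l1_sol}. Otherwise $y_1>\hat y_0$, the constraint is active, and the above monotonicity confines the candidate minimisers to the boundary $y=y_1$ and, whenever it is feasible, the kink $y=0$.

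The main obstacle is unpacking this active-constraint regime according to the sign of $y_0$ to recover the second and third branches. When $y_0\geq 0$ the kink at $0$ lies at or below $\hat y_0$, so on $[y_1,\infty)\subset[\hat y_0,\infty)$ the objective is smooth and strictly increasing, which gives $y^\star=y_1$. When $y_0<0$ the lemma claims the kink at $y=0$ beats the boundary, and I expect the bulk of the work to live in the algebraic comparison between $\epsilon(y_1)$ and $\epsilon(0)$ that isolates the sign of the gap and forces $y^\star=0$. Once this sign-dependent comparison is settled, the three branches of \eqref{min_l1_sol} follow immediately.
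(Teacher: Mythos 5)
Your reduction to soft thresholding plus a monotonicity argument is sound and is essentially the route the paper itself takes: the unconstrained minimizer of the strictly convex $\epsilon$ is $\hat y_0=\mathrm{sign}(y_0)\max\{|y_0|-\frac1r,0\}$, so when $y_1\le\hat y_0$ the constraint is inactive and $y^\star=\hat y_0$, and when $y_1>\hat y_0$ with $y_0\ge 0$ the feasible set $[y_1,\infty)$ lies in the region where $\epsilon$ is strictly increasing, forcing $y^\star=y_1$. For these two branches your argument is complete and cleaner than the paper's ``after basic computation'' case-chasing.

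The problem is the third branch, and the step you defer there is one that cannot be carried out. You plan to compare $\epsilon(0)$ with $\epsilon(y_1)$ and show the kink wins when $y_0<0$, but your own monotonicity observation already decides that comparison the other way: strict convexity makes $\epsilon$ strictly increasing on $[\hat y_0,\infty)\supseteq[y_1,\infty)$ whenever $y_1>\hat y_0$, \emph{regardless of the sign of $y_0$}, so the constrained minimizer is $y_1$ and $\epsilon(y_1)\le\epsilon(0)$ whenever $0$ is feasible. Concretely, take $r=1$, $y_0=-2$, $y_1=-\frac12$: then $\hat y_0=-1<y_1$ and $y_0<0$, yet $\epsilon(-\tfrac12)=\tfrac{13}{8}<2=\epsilon(0)$, so $y^\star=y_1\ne 0$. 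The honest conclusion of your approach is $y^\star=\max\{y_1,\hat y_0\}$ in all cases; the third line of \eqref{min_l1_sol} does not follow from \eqref{min_l1_con} as stated. You are not missing an idea supplied by the paper --- its proof of that branch is the single unsubstantiated sentence that $\epsilon$ ``gains minimal values at zero point.'' If you want to salvage the lemma in the context where it is applied, note that there $y_1=-b_i\le 0$ and $y_0=|\bm W_i|-b_i\ge y_1$, which (after checking both subcases of $\hat y_0=\min\{y_0+\frac1r,0\}$) makes the configuration $y_0<0$, $y_1>\hat y_0$ impossible, so the third branch is vacuous rather than true; your write-up should say this explicitly instead of promising an algebraic comparison that goes the wrong way.
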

\begin{proof}
 We discuss the minimizer of \eqref{min_l1_con} in two cases as follows.
\begin{itemize}
\item[1)] If $y_1 \leq \hat y_0$, one readily has $y^\star = \hat y_0$, which is the soft thesholding of $y_0$ to minimize the problem without constraint $$\hat y_0=\arg\min\limits_y\epsilon(y).$$
\item[2)] If $y_1 > \hat y_0$, the minimizer relies on the sign of $y_0$. When $y_0 \geq 0$, after basic computation, the function $\epsilon (y)$ gains minimal values at $y_1$ if $y_1\geq y_0$ or $\hat y_0<y_1<y_0+\frac1r,$ \emph{i.e.}\[y^\star = y_1.\]
Otherwise when $y_0 < 0$, after basic computation similar to the above case, the function $\epsilon (y)$ gains minimal values at zero point, \emph{i.e.}\[y^\star = 0.\]
\end{itemize}
\end{proof}
According to   Lemma \ref{lemma1}, the above minimization problem \eqref{min_t} reaches its minimum $E^\star_i$ with
\begin{eqnarray}
t_i^\star  =
\left \{
\begin{array}{ll}
 \hat t_i, & \text{if}~-b_i \leq \hat t_i,
 \\
 -b_i, &\text{if}~ -b_i> \hat t_i~ \text {and} ~R_i \geq 0,\\
 0, &\text{if}~ -b_i> \hat t_i~\text {and}~R_i < 0,
\end{array}
\right.
\label{z_sol_l1}
\end{eqnarray}
where $$\hat t= \mathrm{sign}(R)\circ\mbox{max} \{|R|-\frac1{r_2},0\}.$$ Therefore, the optimal length is obtained as follows
\begin{equation}
k_i^\star=
\frac{t^\star_i +b_i}{|\bm W_i|}.
\label{sol_z_l1}
\end{equation}
\vskip .2in
To sum up the results of two cases with $p=1,2$, the optimal solution of $\bm z$ is given
\begin{equation}
\bm z^\star =  k^\star\circ \bm W,
\label{sol_zz}\
\end{equation}
where $k^\star$ is computed by \eqref{sol_z_l1} and \eqref{sol_z}  for $p=1,2$ respectively.


\subsection{Computational complexity}
The computational complexity for the proposed algorithm from Fourier measurements in each outer loop can be estimated as $$3N\log(N)+(29-2p)N,$$ where $N$ is the length of the signal and $p=1,2$ to represent different data fitting terms. It shows that the proposed algorithms is almost linear to the scale of the unknowns.
Further performances about the scalability will be reported in  the following section of experiments.

\subsection{Convergence analysis}
As discussed, one readily knows that each subproblem of the proposed algorithm are well defined. Therefore, we can directly  consider the convergence analysis. The problem is very challengeable, since the objective functional is nonconvex, and the regularized term is also non-differential. In order to provide the convergence analysis for nonconvex optimization problem as \cite{bolte2014proximal,wang2015global,hong2016convergence}, it requires the data fitting term has Lipschitz gradient. However, the data fitting term in our proposed model is not differential at all, which leads to the difficulty to obtain the convergence.

In order to give the convergence analysis, we need  an assumption that the iterative sequences of the multipliers are bounded.  Since the steps $r_j^n\rightarrow +\infty$ for $n\rightarrow +\infty$ for $j=1,2$, our assumption is  weaker than those in \cite{wen2012alternating,chang2016phase}, where the convergence for successive error of iterative multipliers is needed, \emph{i.e.,}
$$\lim\limits_{n\rightarrow \infty} (\Lambda_j^n-\Lambda_j^{n-1})= 0.$$
\begin{theorem}
Let $\bm \Psi^n:=(\bm x^n,\bm q^n,\bm z^n,\bm \Lambda_1^n,\bm\Lambda_2^n)$ be generated by Algorithm I. Assume that the two differences sequences of the multipliers $\{\bm\Lambda^n_j-\bm\Lambda^{n-1}_{j}\}$ for $j=1,2$ are bounded, and $\{\bm x^n\}$ is bounded.  Then there exists a accumulative point $\bm\Psi^\star=(\bm x^\star,\bm q^\star,\bm z^\star,\bm \Lambda_1^\star,\bm\Lambda_2^\star)$ of the sequence $\{\bm \Psi^n\}$, which satisfies the Karush-Kuhn-Tucker (KKT) conditions of \eqref{eqAl}, \emph{i.e.,}

\begin{equation}
\left\{
\begin{split}
&0=\Lambda_1^\star-\mathcal A^\star\bm\Lambda^\star_2,\\
&0\in \lambda\partial_{\bm q} \|\bm q^\star\|_0 -\bm\Lambda_1^\star,\\
&0\in \frac{1}{p}\partial_{\bm z} \|b-|\bm z^\star|\|_p^p+\bm\Lambda_2^\star,\\
&0=\bm q^\star-\bm x^\star,\\
&0=\bm z^\star-\mathcal A \bm x^\star.
\end{split}
\right.
\end{equation}

\end{theorem}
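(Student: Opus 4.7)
The plan is to combine a Bolzano--Weierstrass argument with a careful limit analysis of the first-order optimality conditions of the three subproblems and the two multiplier update formulas. The five KKT identities split naturally into two ``easy'' feasibility equations, two ``easy'' subdifferential inclusions coming from the non-smooth $\bm q$- and $\bm z$-subproblems, and one ``hard'' gradient identity coming from the smooth $\bm x$-subproblem; I will handle them in that order.

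First I would exploit the dynamic penalty $r_j^n=\rho^n r_j^0\to\infty$ together with the standing hypothesis that $\{\bm\Lambda_j^{n}-\bm\Lambda_j^{n-1}\}$ is bounded. Rewriting step (4) gives $\bm x^{n+1}-\bm q^{n+1}=(\bm\Lambda_1^{n+1}-\bm\Lambda_1^n)/r_1^n$ and $\bm z^{n+1}-\mathcal A\bm x^{n+1}=(\bm\Lambda_2^{n+1}-\bm\Lambda_2^n)/r_2^n$, both of which tend to zero. Combined with the boundedness of $\{\bm x^n\}$, this immediately delivers boundedness of $\{\bm q^n\}$ and $\{\bm z^n\}$, and it also shows that at any accumulation point the two primal feasibility KKT identities $\bm q^\star=\bm x^\star$ and $\bm z^\star=\mathcal A\bm x^\star$ hold automatically.

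Next I would write the first-order optimality for subproblems (2) and (3). The closed-form solution \eqref{sol_q} is equivalent to $0\in\lambda\,\partial_{\bm q}\|\bm q^{n+1}\|_0-\bm\Lambda_1^n-r_1^n(\bm x^{n+1}-\bm q^{n+1})$, and invoking step (4) yields exactly $\bm\Lambda_1^{n+1}\in\lambda\,\partial_{\bm q}\|\bm q^{n+1}\|_0$; analogously one gets $-\bm\Lambda_2^{n+1}\in\tfrac1p\,\partial_{\bm z}\|b-|\bm z^{n+1}|\|_p^p$. These two identities are already the second and third KKT conditions at iteration $n+1$, and by the outer semicontinuity of the limiting subdifferential they will pass to the limit along any subsequence on which the relevant variables converge. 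In particular, these identities also control the magnitudes of $\{\bm\Lambda_j^n\}$, so (together with the limits above) Bolzano--Weierstrass yields a common convergent subsequence $\{\bm\Psi^{n_k}\}\to\bm\Psi^\star$.

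The main obstacle is the first KKT identity $\bm\Lambda_1^\star=\mathcal A^*\bm\Lambda_2^\star$. Starting from the Euler--Lagrange equation \eqref{el-x} of the $\bm x$-subproblem and substituting both multiplier updates, a short manipulation yields
\[
\bm\Lambda_1^{n+1}-\mathcal A^*\bm\Lambda_2^{n+1}+r_1^n(\bm q^{n+1}-\bm q^n)+r_2^n\mathcal A^*(\bm z^{n+1}-\bm z^n)=0,
\]
so everything comes down to showing that the two ``lifted'' remainders $r_1^n(\bm q^{n+1}-\bm q^n)$ and $r_2^n\mathcal A^*(\bm z^{n+1}-\bm z^n)$ tend to zero along $\{n_k\}$. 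My plan is to substitute $\bm q^{n+1}-\bm x^{n+1}=-(\bm\Lambda_1^{n+1}-\bm\Lambda_1^n)/r_1^n$ (and its counterpart for $\bm z$) to rewrite these remainders as $r_1^n(\bm x^{n+1}-\bm x^n)$ plus consecutive multiplier differences scaled by $r_1^n/r_1^{n-1}=\rho$, then close the argument by combining the convergence $\bm x^{n_k}\to\bm x^\star$ with the boundedness of $\bm\Lambda_1^n-\bm\Lambda_1^{n-1}$. This scaling step, which requires matching the $\rho$-growth of $r_j^n$ against the boundedness of successive multiplier differences, is the most delicate point of the proof; once it is in hand, passing to the limit along $\{n_k\}$ yields the first KKT condition and finishes the argument.
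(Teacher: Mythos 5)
Your skeleton (boundedness $\Rightarrow$ accumulation point $\Rightarrow$ pass the optimality conditions of each subproblem to the limit) is the same one the paper sketches, and several of your steps are correct and more explicit than the paper's: the identities $\bm x^{n+1}-\bm q^{n+1}=(\bm\Lambda_1^{n+1}-\bm\Lambda_1^n)/r_1^n\to 0$ and $\bm z^{n+1}-\mathcal A\bm x^{n+1}=(\bm\Lambda_2^{n+1}-\bm\Lambda_2^n)/r_2^n\to 0$ do give the two feasibility conditions and the boundedness of $\{\bm q^n\},\{\bm z^n\}$, and the inclusions $\bm\Lambda_1^{n+1}\in\lambda\partial\|\bm q^{n+1}\|_0$ and $-\bm\Lambda_2^{n+1}\in\frac1p\partial\|b-|\bm z^{n+1}|\|_p^p$ are correctly derived and do pass to the limit by semicontinuity (this is exactly the paper's ``semi-continuity of the $L^0$ norm'' step). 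However, two of your steps fail as written. First, the claim that these inclusions ``control the magnitudes of $\{\bm\Lambda_j^n\}$'' is wrong for $j=1$: the limiting subdifferential of $\|\cdot\|_0$ at a zero coordinate is the whole space, so on $\{i:\bm q_i^{n+1}=0\}$ the inclusion imposes no bound whatsoever on $(\bm\Lambda_1^{n+1})_i$ (from \eqref{sol_q} one only gets $|(\bm\Lambda_1^{n+1})_i|\leq\sqrt{2\lambda r_1^n}$ there, which diverges), and bounded increments do not prevent a sequence from drifting to infinity. So the extraction of a convergent subsequence of the full $\bm\Psi^n$, multipliers included, is not justified without an additional boundedness hypothesis on $\{\bm\Lambda_1^n\}$. (To be fair, the paper's own sketch has the same hole: it asserts ``$\bm\Psi^n$ is bounded'' and invokes boundedness of $\bm\Lambda_2^n$, which is not among the stated assumptions either.)

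Second, and more seriously, your route to the first KKT identity does not close. The reduction to showing $r_1^n(\bm q^{n+1}-\bm q^n)\to 0$ and $r_2^n\mathcal A^*(\bm z^{n+1}-\bm z^n)\to 0$ is algebraically correct, but your proposed substitution gives
\begin{equation*}
r_1^n(\bm q^{n+1}-\bm q^n)=-(\bm\Lambda_1^{n+1}-\bm\Lambda_1^n)+\rho\,(\bm\Lambda_1^{n}-\bm\Lambda_1^{n-1})+r_1^n(\bm x^{n+1}-\bm x^n),
\end{equation*}
and no term on the right is known to vanish: the multiplier increments are only assumed \emph{bounded}, not null, and $r_1^{n_k}(\bm x^{n_k+1}-\bm x^{n_k})$ multiplies a geometrically divergent factor by a difference that mere convergence of $\bm x^{n_k}$ does not control (convergence of a subsequence says nothing about consecutive differences, let alone their decay rate against $\rho^{n}$). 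This is precisely the ``sufficient decrease'' estimate the paper itself concedes it cannot establish for this nonconvex, non-Lipschitz problem. As it stands your argument proves the second through fifth KKT relations but not the first; to recover it you would need either the stronger hypothesis $\bm\Lambda_j^{n+1}-\bm\Lambda_j^n\to 0$ combined with a decay estimate on $r_j^n$ times the successive primal differences, or a direct argument that the accumulation point is a fixed point of the iteration map.
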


\begin{proof}
We just give a sketch proof, and see details in Refs. \cite{wen2012alternating,chang2016phase}.
First one needs to prove the boundedness of other iterative sequences.   By \eqref{sol_q}, one can derive the boundedness of $\bm q^n$. By \eqref{sol_z} for $p=1,2$, one can readily prove the $\bm z^n$ is bounded following the boundedness of $\bm x^n$ and $\bm \Lambda_2^n$. Therefore, $\bm\Psi^n$ is bounded, and as a result there exists a accumulative point $\bm \Psi^\star$. By the semi-continuities  of $L^0$ norm, we can finish the proof.
\end{proof}
\begin{remark}
In the work \cite{wen2012alternating}, a dynamic scheme was also adopted in the numerical tests. In this paper, we give a very simple updating schemes of the steps, which not only help to remove the assumption of the convergence of successive error sequences in \cite{wen2012alternating,chang2016phase}, but also improve the successful recovery rates dramatically compared with the fixed-step scheme by setting $\rho=1$. We will report the numerical performances in the following section.
\end{remark}
%
The above theorem only provides the local convergence of the proposed algorithm. However, as mentioned above, it is usually  difficult to establish the global convergence analysis for splitting type algorithms such as ADMM or proximal linearized algorithm since it can not  guarantee the sufficient decrease of the iterative errors.  As a future work, we aim to analyze the global convergence of ADMM for the nonconvex optimization problem with non-Lipschitz continuous gradient.

\section{Numerical Simulations}\label{sec4}
In this section, we use Algorithm 1 to solve the model \eqref{l0lp} with  $L^2$  and $L^1$ data fidelity, which are called as L0L2PR and L0L1PR, respectively. A series of numerical simulations are conducted to demonstrate the signal recovery accuracy, robustness to noise and computational efficiency.
All algorithms are implemented in Matlab R2013a environment on a Intel Core i5-4590 CPU@3.30GHz.

{\subsection{Implementation and Evaluation}
The choice of parameters in our model is quite easy, some of which can be fixed, \emph{i.e.,} $r_1^0$, $r_2^0$ and $r_{max}$. We give the default values of all parameters in TABLE \ref{parameters}. We tune the value of $\lambda$ relying on the sparsity and noise level of the observed magnitude data, which will be discussed in details later. Besides, for the signal with noise, we also decrease $\rho$ to $\rho=1.0001$ for better recovery accuracy. The notation $s$ is used to counter the number of nonzeros of sparse signals, and  bigger value of $s$ means the sparsity is weaker .
\begin{table}[ht]
\caption{\label{parameters} The default values of the parameters used in the proposed model.}
\begin{center}
\begin{spacing}{1.1}
\begin{tabular}{cccl}
\hline
\hline
Parameter & L0L2PR & L0L1PR&Interpretation\\
\hline
\hline
$\lambda$&$1.0\times 10^{-4}$&$1.0\times 10^{-3}$& control the sparsity\\
$r^0_1$&$1.0\times 10^{-3}$&$1.0\times 10^{-2}$&initial value of $r_1$\\
$r^0_2$&$1.0\times 10^{-3}$&$1.0\times 10^{-2}$&initial value of $r_2$\\
$\rho$&1.0005&1.0005&update speed of $\rho$\\
$r_{max}$&100&100&maximum of $r_1$, $r_2$\\
\hline
\hline
\end{tabular}
\end{spacing}
\end{center}
\end{table}

For a given random signal of length $N$ with $s$ nonzero elements, we calculate the magnitude of its DFT as $b$. We examine the recovery success rate called ``Recovery Probability'' as a function of the number of the successful trails, which is defined as \[
\mbox{Recovery Probability}=\dfrac{N_{suc}}{N_{trial}}\] with $N_{trial}$ trails.
In all tests, we set $N_{trail}=100$.
We compare our proposed algorithms with two state-of-art algorithms with relative low computational complexity, \emph{i.e.,} SPR algorithm \cite{mukherjee2014fienup} and GESPAR \cite{shechtman2014gespar}\footnote{\url{http://www.stanford.edu/~yoavsh/GESPAR_1D_Fourier_6_15.zip}}. For a fair comparison, we implement both SPR algorithm and GESPAR without support information. According to \cite{shechtman2014gespar}, the GESPAR is implemented with residual of iterative solution reaches the tolerance $TOL=10^{-4}$ and $ITER=6400$ for noiseless signal and $TOL=10^{-4}$ and $ITER=10000$ for noise signal, respectively.
}
\subsection{Performance for Noise Free Measurements}
In the first place, we examine the recovery success rate of the proposed algorithms.
We random generate a signal with complex values of length $N=128$ and sparsity $s=25$. The parameters in TABLE \ref{parameters} are adopted.
Both the measured Fourier magnitude $b$ and the recovered complex elements from L0L1PR are displayed in Fig. \ref{example}, which demonstrates the proposed algorithm work pretty well in this setting.

\begin{figure}[!htb]
\begin{center}
\subfigure[]{\includegraphics[scale=0.24]{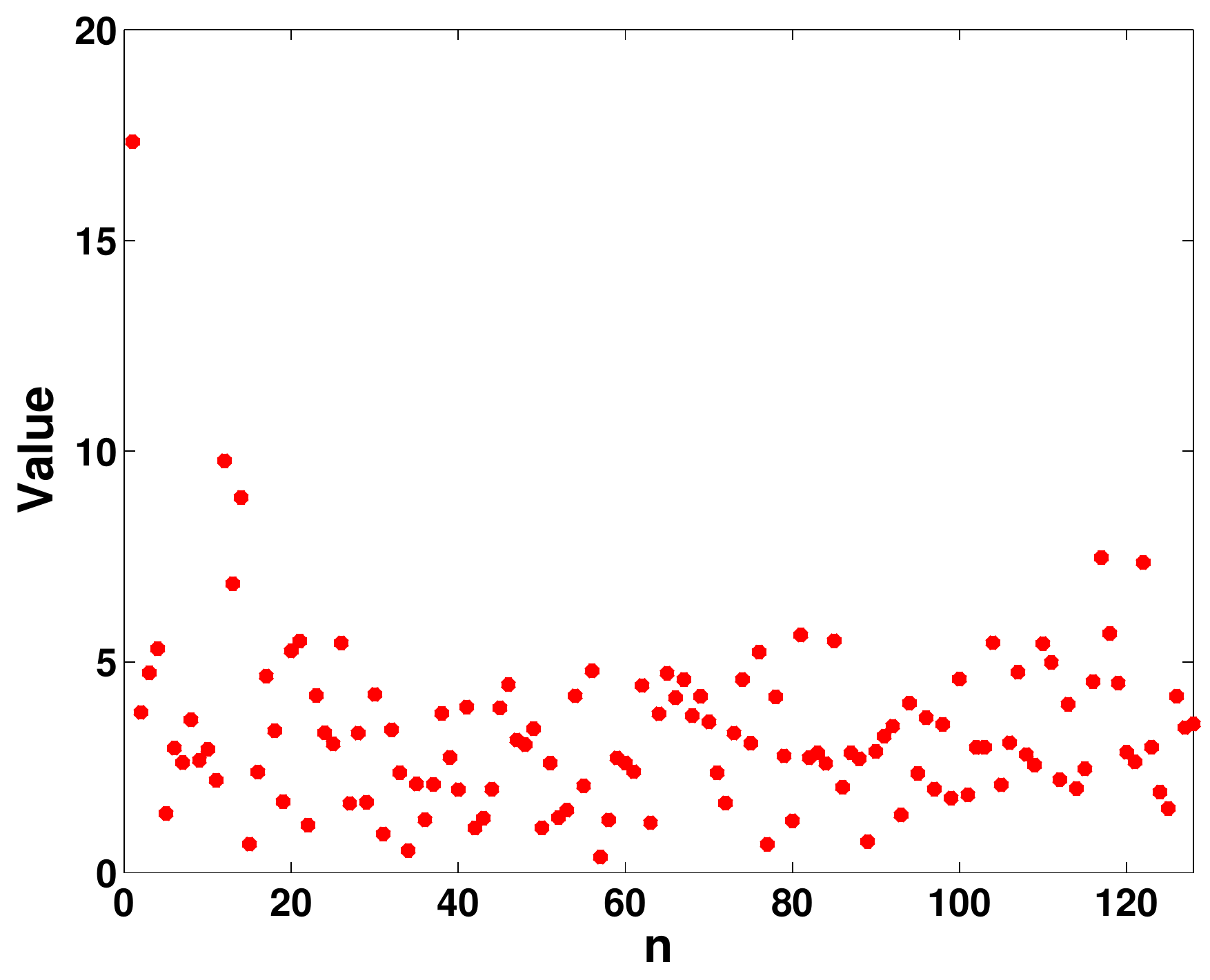}}
\subfigure[]{\includegraphics[scale=0.24]{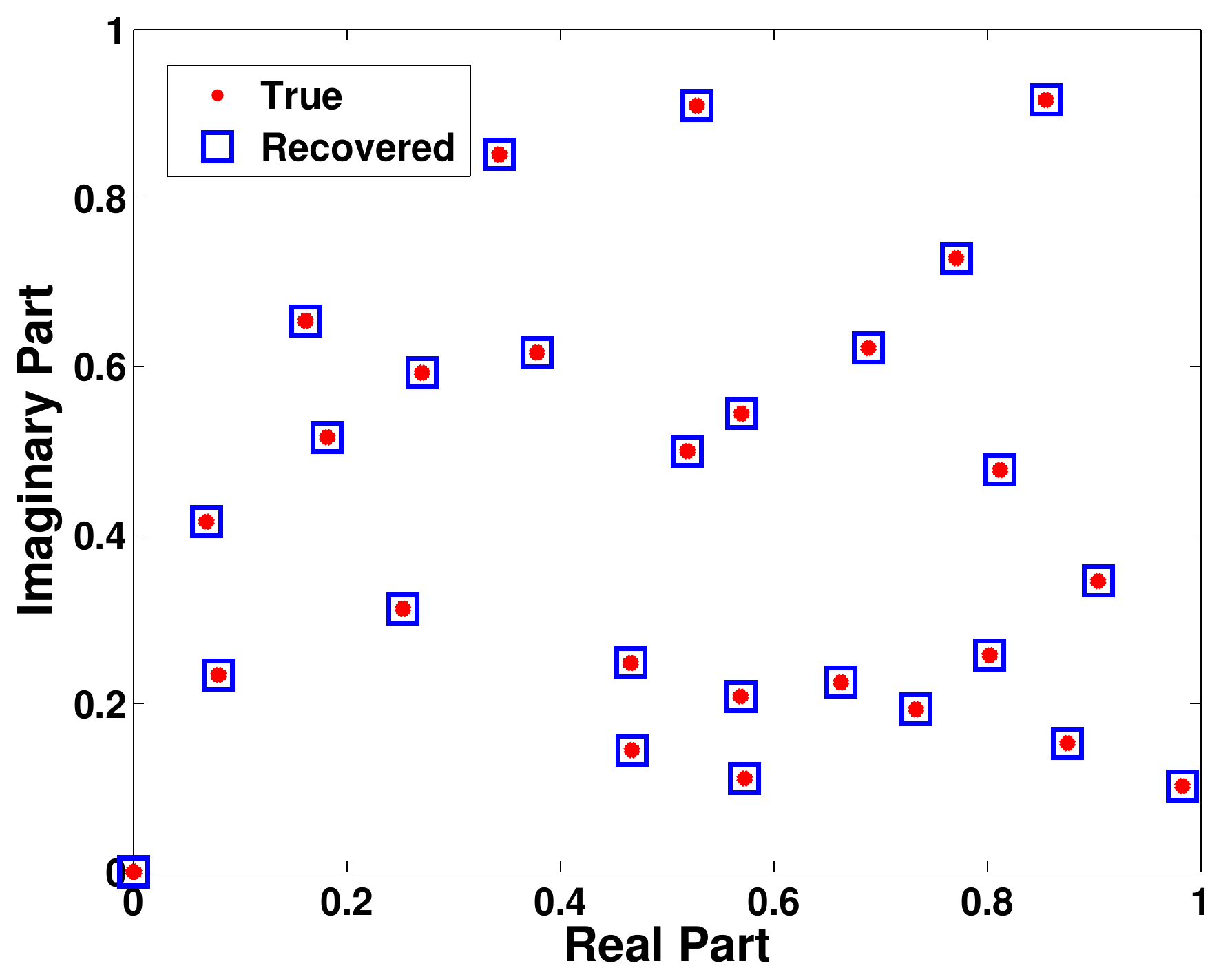}}
\caption{Fourier PR example. (a) Measured Fourier magnitude $b$. (b) True and recovered complex elements.} \label{example}
\end{center}
\end{figure}

We define the numerical energy for the proposed model as follows
\[
E(\bm q^n,\bm z^n) = \lambda\|\bm q^n\|_0 + \frac1p\|b-|\bm z^n|\|^p_p.
\]
Fig. \ref{example_energy} plots the numerical energies of the proposed algorithms terminated with $r_1>r_{max}$ for Fig. \ref{example}. Although the energies can not monotonically decreasing, and there are some oscillations, the energies of both two algorithms decay to steady states as the iterations increases.
\begin{figure}[!htb]
\begin{center}
\subfigure[]{\includegraphics[scale=0.24]{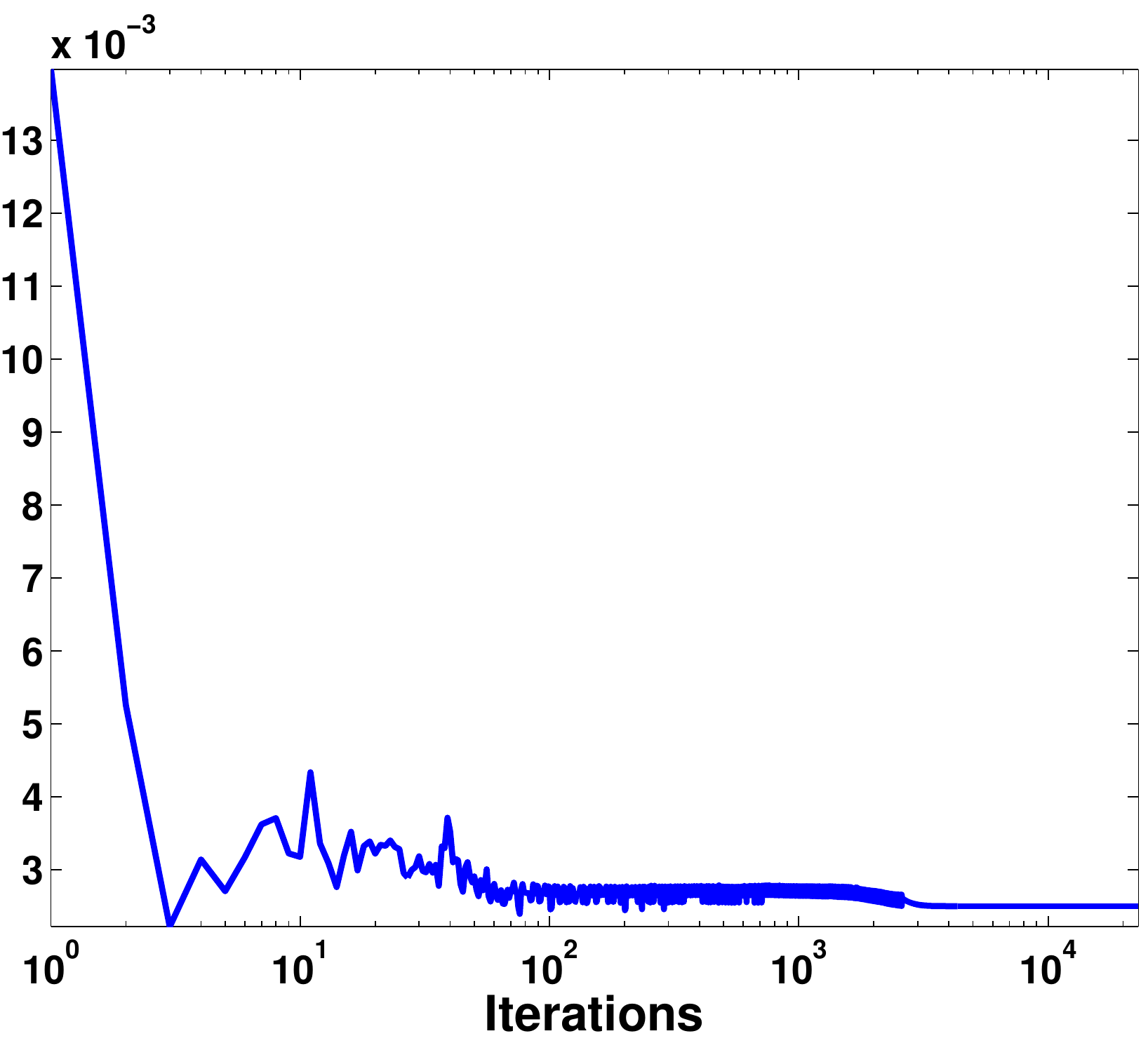}}
\subfigure[]{\includegraphics[scale=0.24]{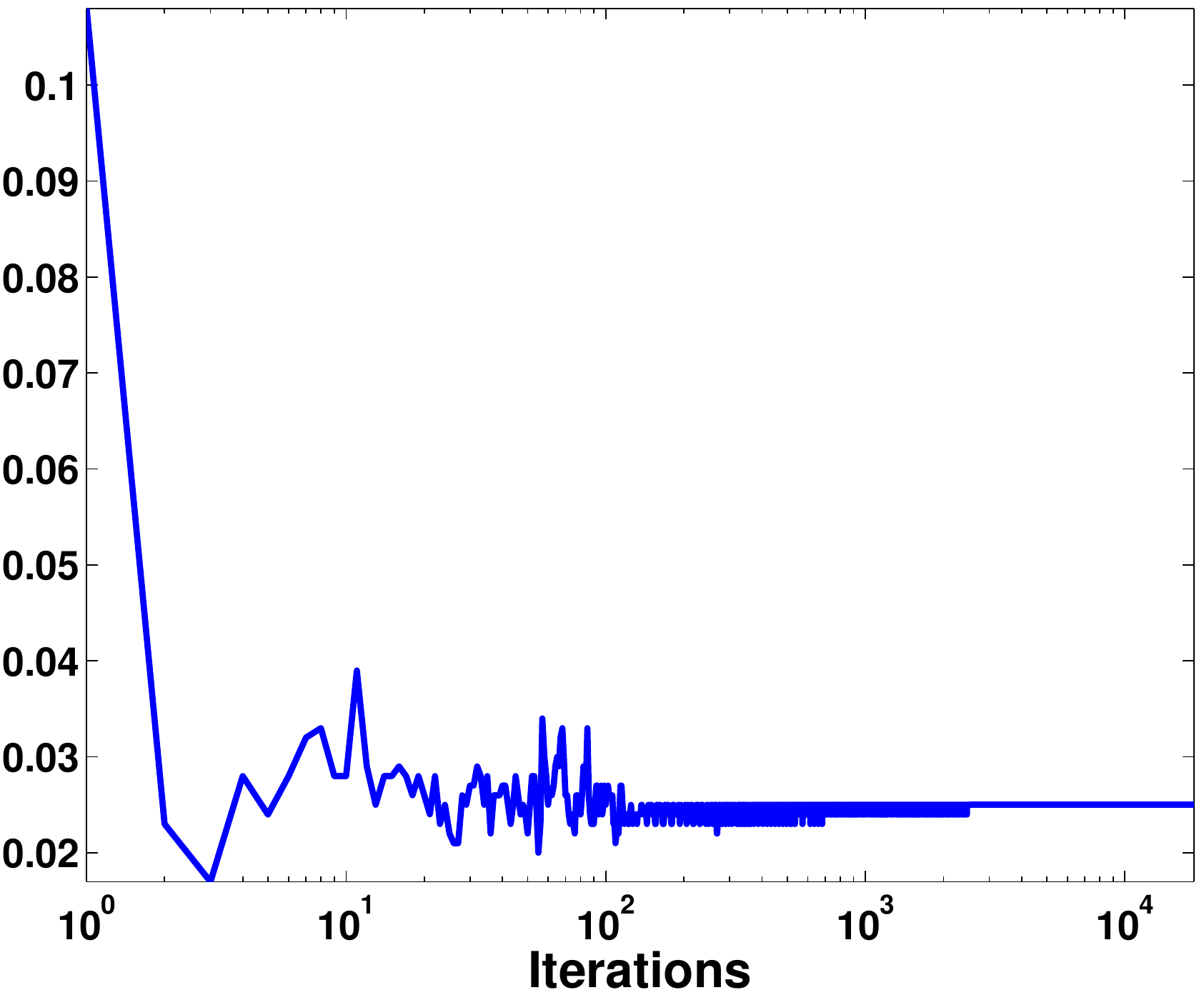}}
\caption{Decay of numerical energy of the 1D Fourier PR example. (a) Numerical energy of L0L2PR model. (b) Numerical energy of L0L1PR model.} \label{example_energy}
\end{center}
\end{figure}

Next, we compare the proposed algorithms with SPR algorithm and GESPAR. The recovery probability is shown in Fig. \ref{recovery_result_analysis}. We observe that all the algorithms work well when the sparsity is strong and the proposed algorithms outperforms the other two  when the sparsity becomes weak, \emph{i.e.,} $s\geq20$ for the signal of length $N=128$. It also demonstrates that recovery probability of L0L1PR is a little bit higher than that of L0L2PR for $s\in[25,30]$.

\begin{figure}[!htb]
\begin{center}
{\includegraphics[scale=0.3]{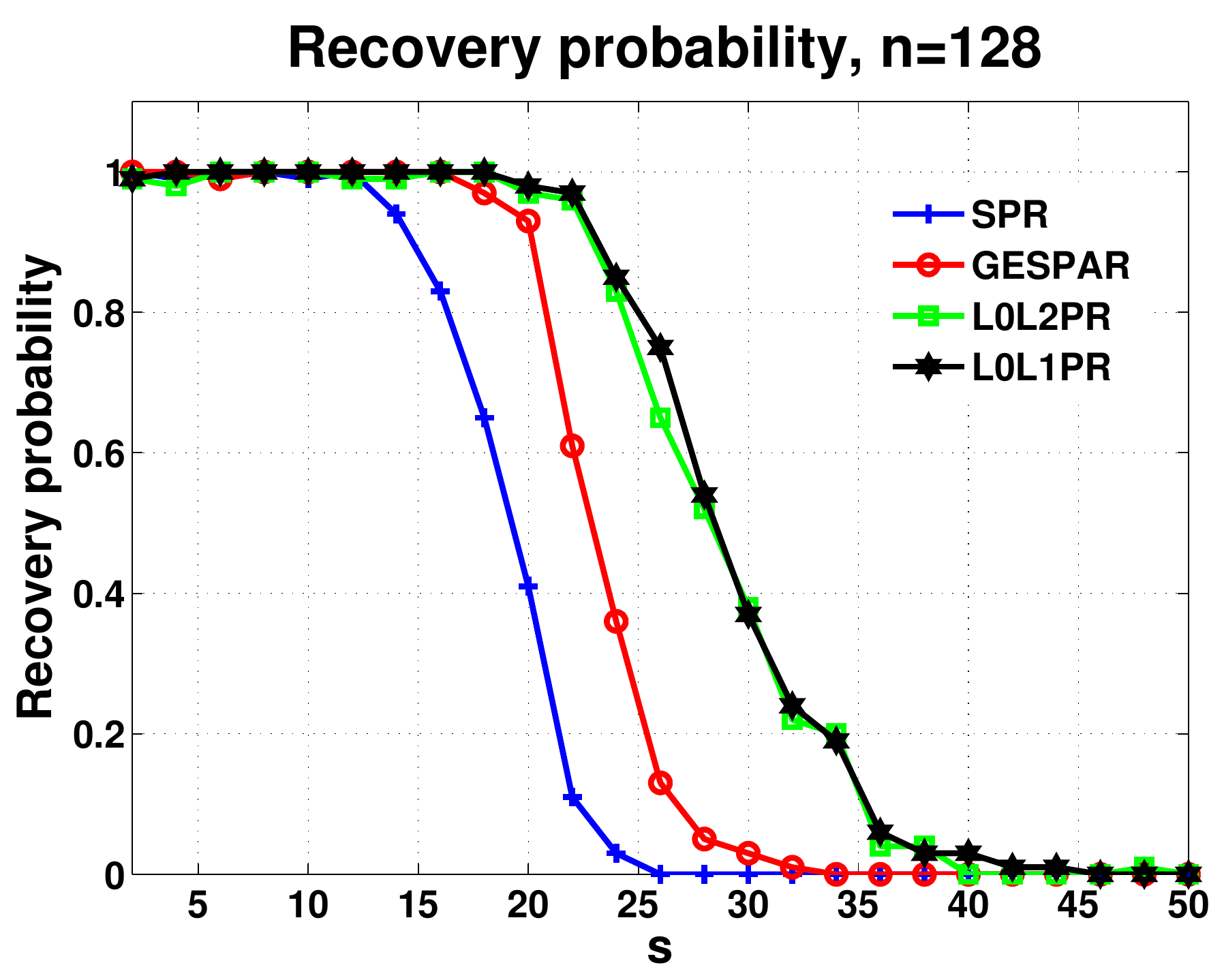}}
\caption{Recovery probability versus sparsity level.} \label{recovery_result_analysis}
\end{center}
\end{figure}

\subsection{Performance for Noisy Measurements}
In practice, the magnitude measurements may be corrupted by additive noise, \emph{i.e.,} the measurements are of the form
\begin{equation}
b = |\mathcal{F}\bm x|+ noise,
\label{noise_model}
\end{equation}
In our experiments, white Gaussian noise is added to the measurements at different SNR values which is defined below
\[\mbox{SNR}=-20\min\limits_{|c|=1}\log \frac{\|b-c \hat b\|}{\|\hat b\|},\]
where $b$ and $\hat b$ are clean and noisy measurements, respectively. In the implementation, we choose $\lambda$ for the proposed algorithms according to the noise level as shown in TABLE \ref{para_noise}, where larger $\lambda$ is used to recover signals with stronger noise, \emph{i.e.,} $\rho=1.0001$. Other parameters are set to the same as TABLE \ref{parameters}. The normalized mean squared error (NMSE) obtained from the proposed algorithms and GESPAR under different SNR values is plotted in Fig. \ref{noise_analysis}, which is explicitly defined as
\[
\mathrm{NMSE}=\min\limits_{|c|=1}\dfrac{\|\hat{\bm x}-c\bm x\|}{\|\bm x\|},
\]
with $\bm x,~\hat {\bm x}$ denoting clean and recovered signal, respectively. When the sparsity is very strong, e.g., $s=2$, GESPAR outperforms our proposed algorithms. When the sparsity decrease as $s$ goes to $s=20$, it is clearly shown that both $L^0$ based algorithms outperform GESPAR. The proposed L0L1PR and L0L2PR can  produce comparable results from the noisy measurements, and are quite robust w.r.t. different noise levels.
\begin{table}[ht]
\caption{\label{para_noise} $\lambda$ used for L0L2PR and L0L1PR under different SNR.}
\begin{center}
\begin{tabular}{cccc}
\hline
\hline
SNR&40&30&20\\
\hline
\hline
L0L2PR&$1.0\times 10^{-4}$&$5.0\times 10^{-4}$&$3.0\times 10^{-3}$ \\
L0L1PR&$2.0\times 10^{-2}$&$8.0\times 10^{-3}$&$1.5\times 10^{-3}$\\
\hline
\hline
\end{tabular}
\end{center}
\end{table}
\begin{figure}[!htb]
\begin{center}
{\includegraphics[scale=0.3]{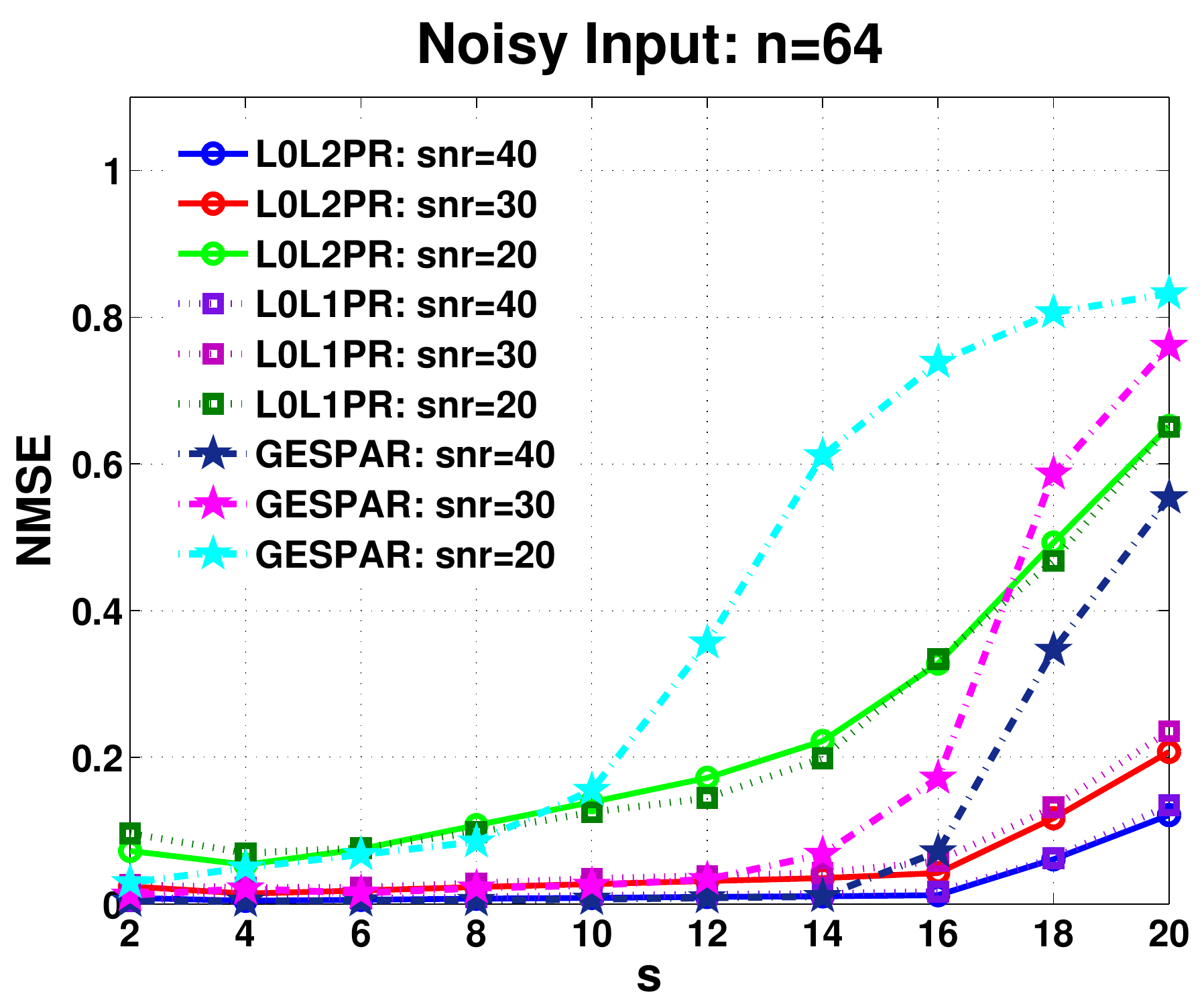}}
\caption{Normalized MSE versus sparsity level.} \label{noise_analysis}
\end{center}
\end{figure}

\subsection{Scalability}
The most significant advantage of the $L^0$ based algorithms over SPR methods and GESPAR is their ability to recover the signal with weak sparsity. We now examine the performances for signals with different sparsity levels. The recovery probabilities of the proposed algorithms are collected  by conducting evaluations on signals of length with $N=3200$, $N=6400$ and $N=12800$ and the sparsity $s$ among $s=30$ to $s=180$. For this experiment, we set the noise level to $SNR=1001$, which can be regarded as the noiseless case.

The recovery probability of L0L2PR versus different lengths of signal is plotted in Fig. \ref{scalabilityl2}. The subplot of Fig. \ref{scalabilityl2} presents the recovery probability of L0L2PR with the same regularization parameter $\lambda=1.0\times 10^{-5}$, in which the recovery probability of signal with length $N=12800$ is not good enough when the sparsity is strong. Indeed, we can improve the recovery probability by tuning optimal $\lambda$ w.r.t. different sparsity levels as the main plot shown in Fig. \ref{scalabilityl2}.
More importantly, we observe that the maximal $s$ that can be recovered successfully increases when the length of signal $N$ increases.
\begin{figure}[!htb]
\begin{center}
\includegraphics[width=.3\textwidth]{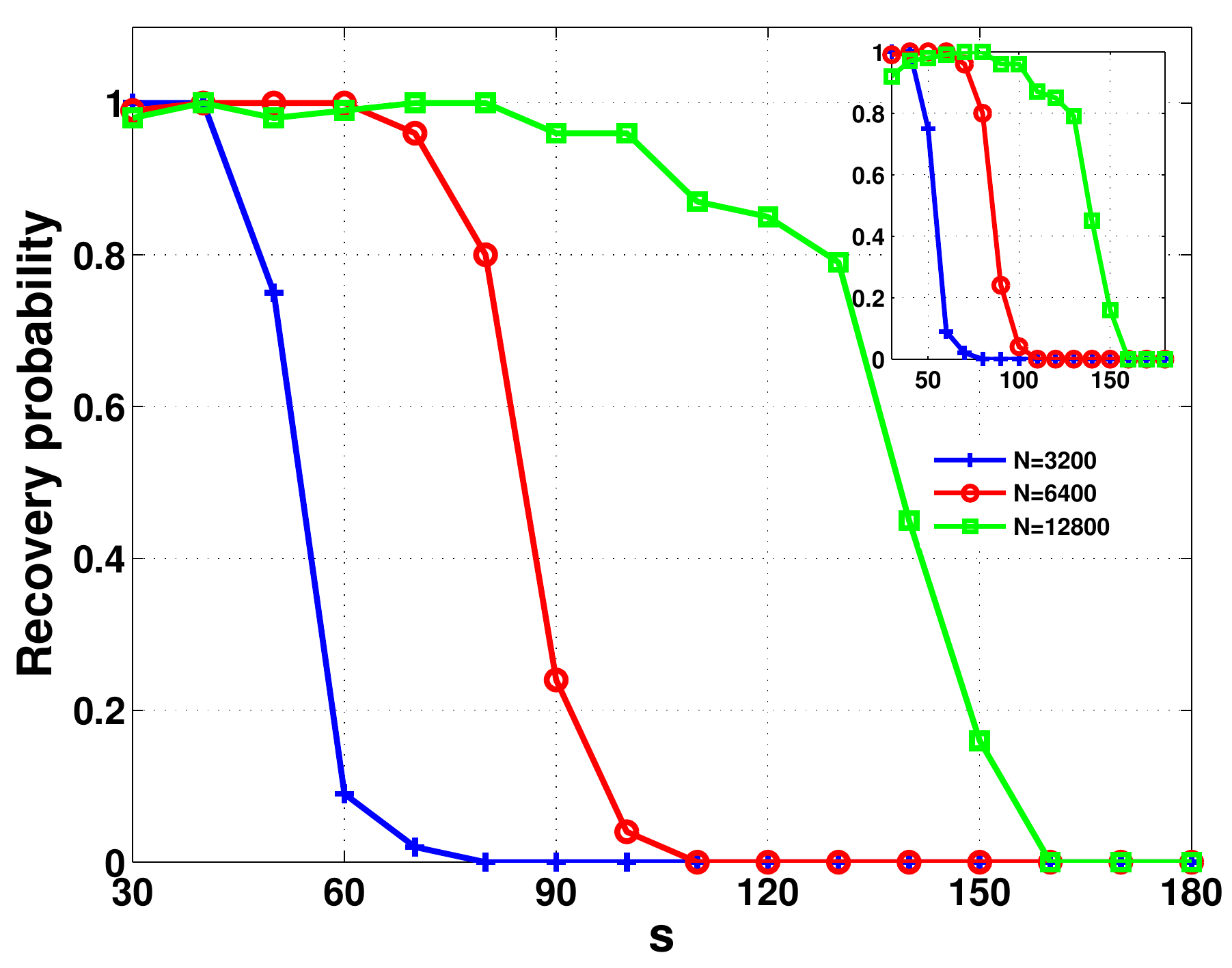}
\caption{Recovery probability of L0L2PR versus sparsity level.} \label{scalabilityl2}
\end{center}
\end{figure}

Similarly, we set $\lambda=5.0\times 10^{-5}$ for L0L1PR and plot the recovery probability for signals of different lengths in Fig. \ref{scalabilityl1}. By comparing with Fig. \ref{scalabilityl2}, we observe that the recovery probability of L0L1PR is slightly better than L0L2PR when $s\leq120$, and slightly worse than L0L2PR when $120<s<150$. It means that the $L^1$ data fidelity outperforms the $L^2$ data fitting when the sparsity is significant.
\begin{figure}[!htb]
\begin{center}
\includegraphics[width=.3\textwidth]{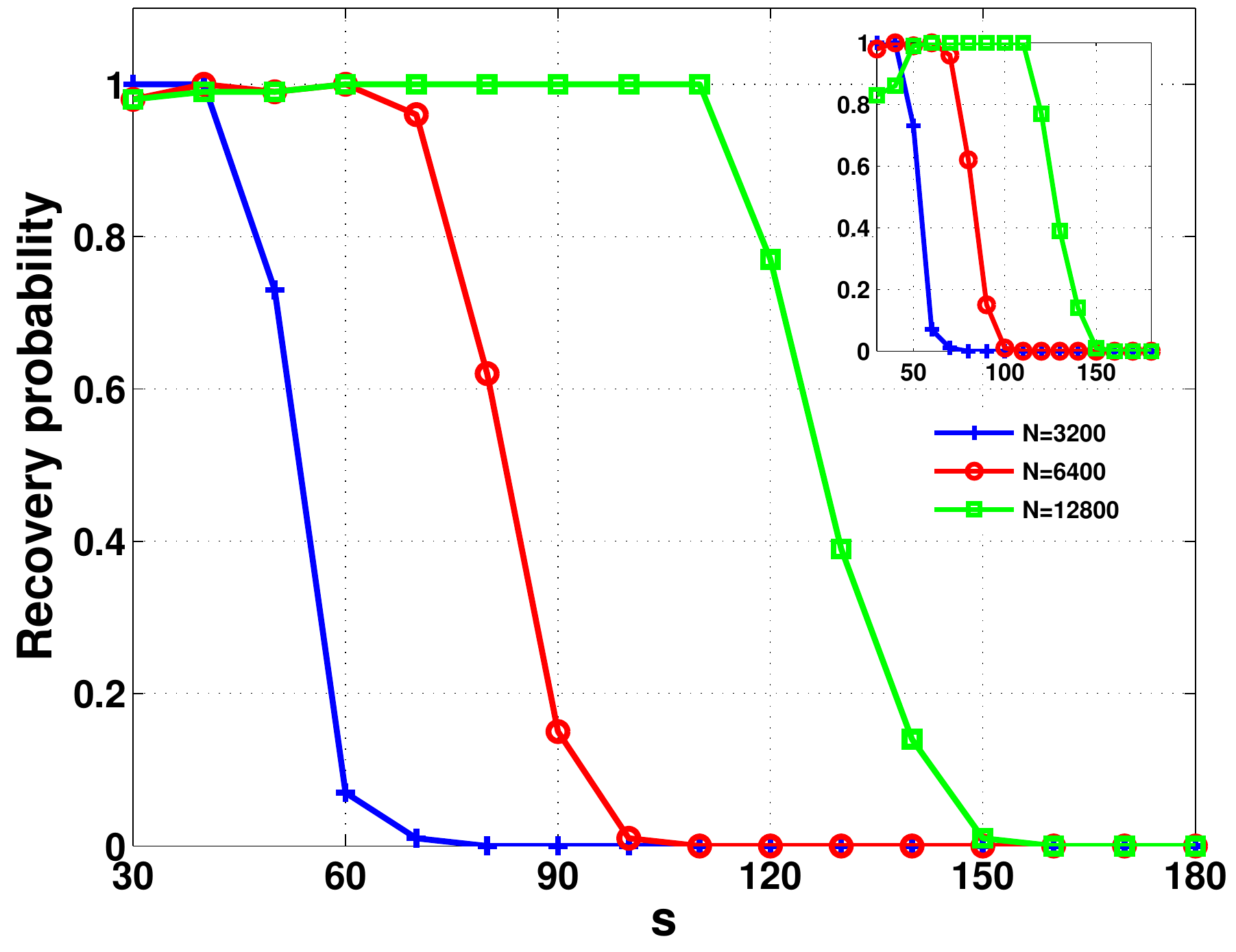}
\caption{Recovery probability of L0L1PR versus sparsity level.} \label{scalabilityl1}
\end{center}
\end{figure}

In addition, we also conduct a comparison experiment among SPR, GESPAR and L0L1PR by decreasing the sparsity of signals with length from $N=16$ to $N=1024$. We define the sparsity ratio (SR) as
\[\mathrm{SR}=\dfrac{s}{N}\times 100\%.\]
In the experiments, we set SR to $2\%$, $4\%$, $6\%$, $8\%$ and $10\%$ of the signal length, respectively. Selected recovery probability and average runtime comparison of the three algorithms is shown in TABLE \ref{comparison} for $N=1024$. The runtime is averaged over all successful recoveries. As shown in the table, SPR based algorithm is significantly faster than GESPAR and L0L1PR. The GESPAR is faster than the L0L1PR when the sparsity is strong, \emph{i.e.,} nonzero elements are limited, while the efficiency of GESPAR drops significantly as the number of nonzero elements increases. Besides, L0L1PR can recover the signal with a recovery probability $62\%$ when the recovery probability of both SPR and GESPAR already drops to zero as the number of nonzero elements increases to $s=82$. The recovery probability versus sparsity for different lengths of signal is shown in Fig. \ref{scalability}, which demonstrates that GESPAR outperforms the others for signals with very strong sparsity, and L0L1 outperforms the others for signals with relatively weak sparsity.

\begin{table*}[ht]
\caption{\label{comparison} Recovery probability (\%) and runtime (sec) comparison.}
\begin{center}
\begin{spacing}{1.1}
\begin{tabular}{|c|c|c|c|c|c|c|c|}
\hline
Sparsity Ratio& Nonzero Number &\multicolumn{2}{c|}{SPR}&\multicolumn{2}{c|}{GESPAR}&\multicolumn{2}{c|}{L0L1PR}\\
\hline
\hline
SP&s&recovery&time&recovery&time&recovery&time\\
\hline
\hline
2\%&21&100&0.001&100&2.46&100&6.85\\
4\%&41&100&0.001&100&2.46&100&6.85\\
6\%&62&75&0.002&21&346.37&93&6.98\\
8\%&82&0&--&0&--&62&6.98\\
10\%&103&0&--&0&--&27&7.01\\
\hline
\end{tabular}
\end{spacing}
\end{center}
\end{table*}

\begin{figure}[!htb]
\begin{center}
\subfigure[SPR]{\includegraphics[width=.3\textwidth]{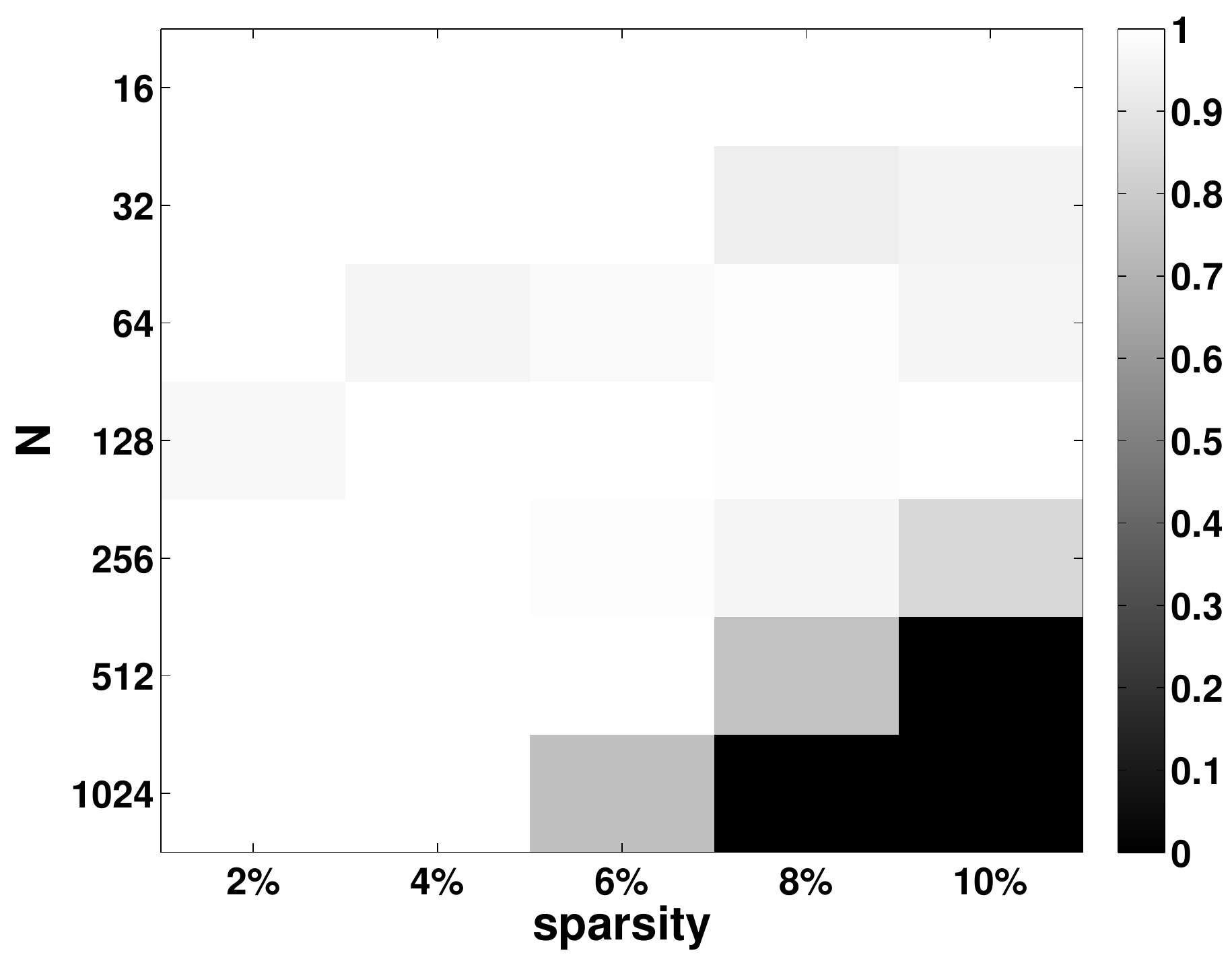}}
\subfigure[GESPAR]{\includegraphics[width=.3\textwidth]{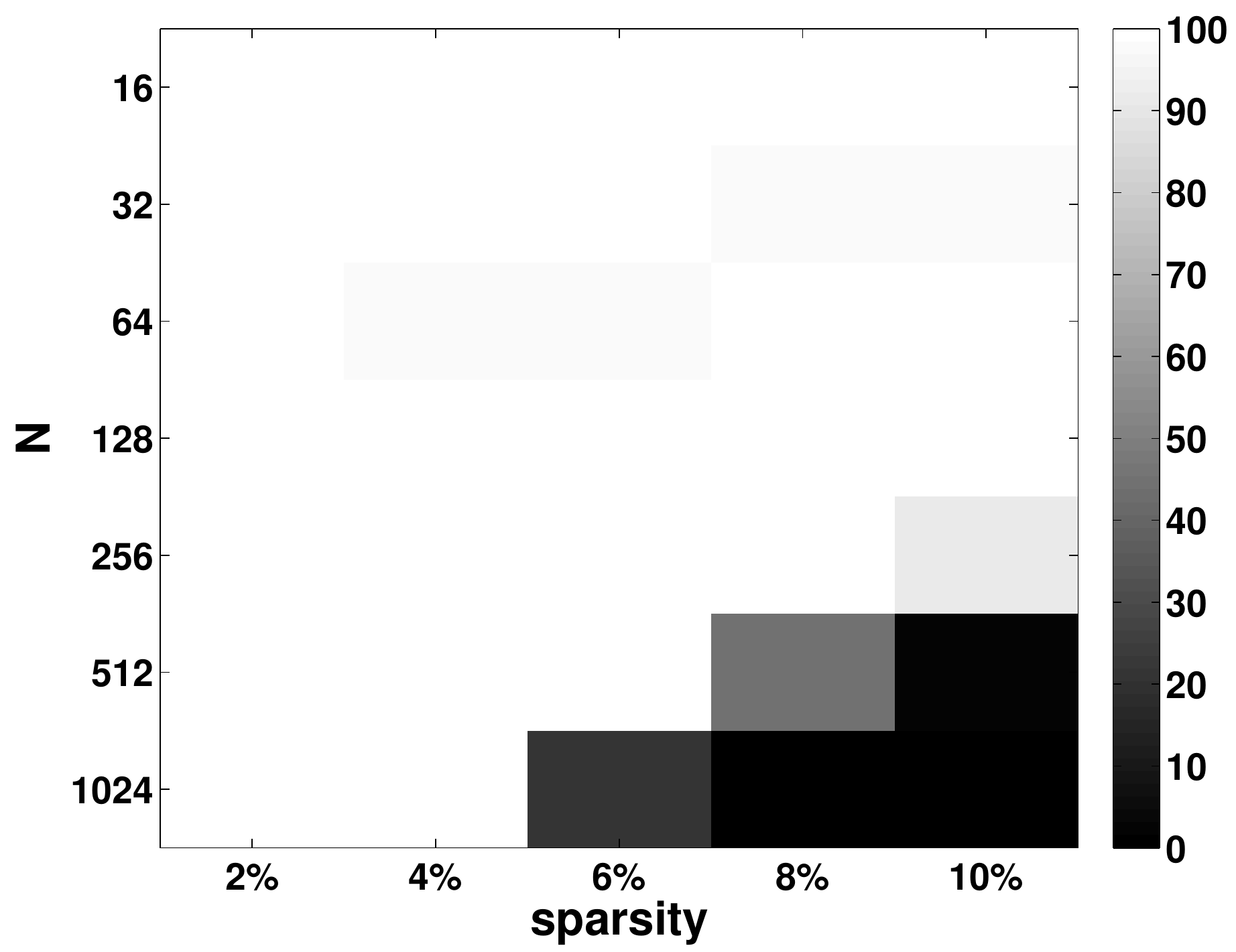}}
\subfigure[L0L1PR]{\includegraphics[width=.3\textwidth]{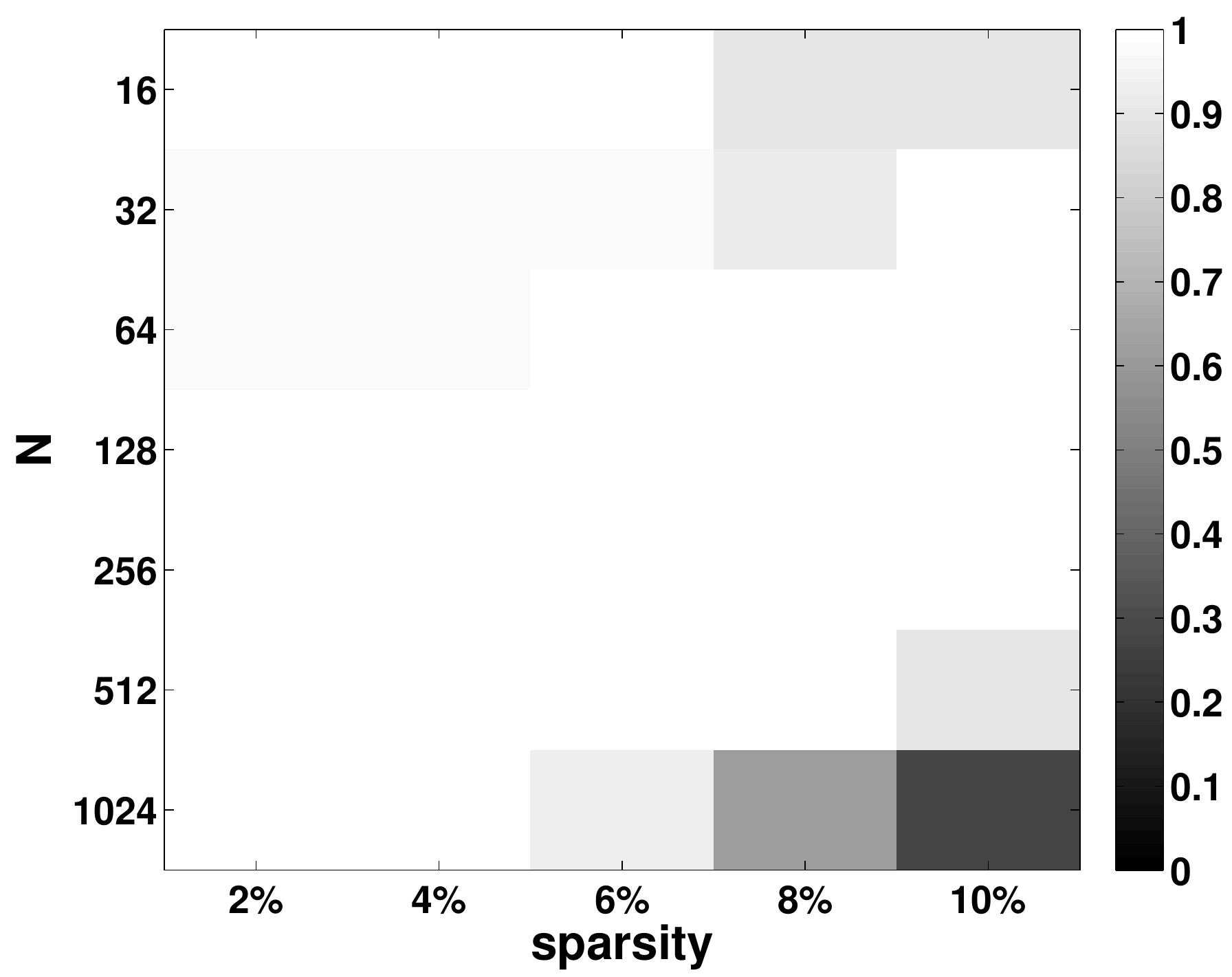}}
\caption{Recovery probability versus sparsity. (a) SPR; (b) GESPAR; (c) L0L1PR} \label{scalability}
\end{center}
\end{figure}

\begin{remark}
For a fair comparison, we fix $\lambda=1.0\times 10^{-3}$ for all testing signals with different lengths and sparsity. As shown by the previous experiment, the recovery probability of signals with $N=16$, $N=32$ and $s=8\%$, $s=10\%$ can be further improved by tuning the parameter $\lambda$.
\end{remark}

\subsection{Computational Time}
We compare the computational efficiency of the proposed $L^0$ based model and GESPAR in two-fold. On one hand, we work on signals with fixed length $N=512$ and variable sparsity, \emph{i.e.,} $\mathrm{SR}=2\%,~4\%,~ 6\%,~8\%,~10\%$. Both computational time and the corresponding NMSE are plotted in Fig. \ref{time1}. It shows that the computational time of $L^0$ based algorithms remain the same even when the sparsity of the signals decreases, while the computational time of GESPAR increases dramatically when the nonzero elements of the signals increases. Moreover, the NMSE also demonstrate that the proposed L0L1PR can produce higher accuracy  results. \begin{figure}[!htb]
\begin{center}
\subfigure[]{\includegraphics[width=.24\textwidth]{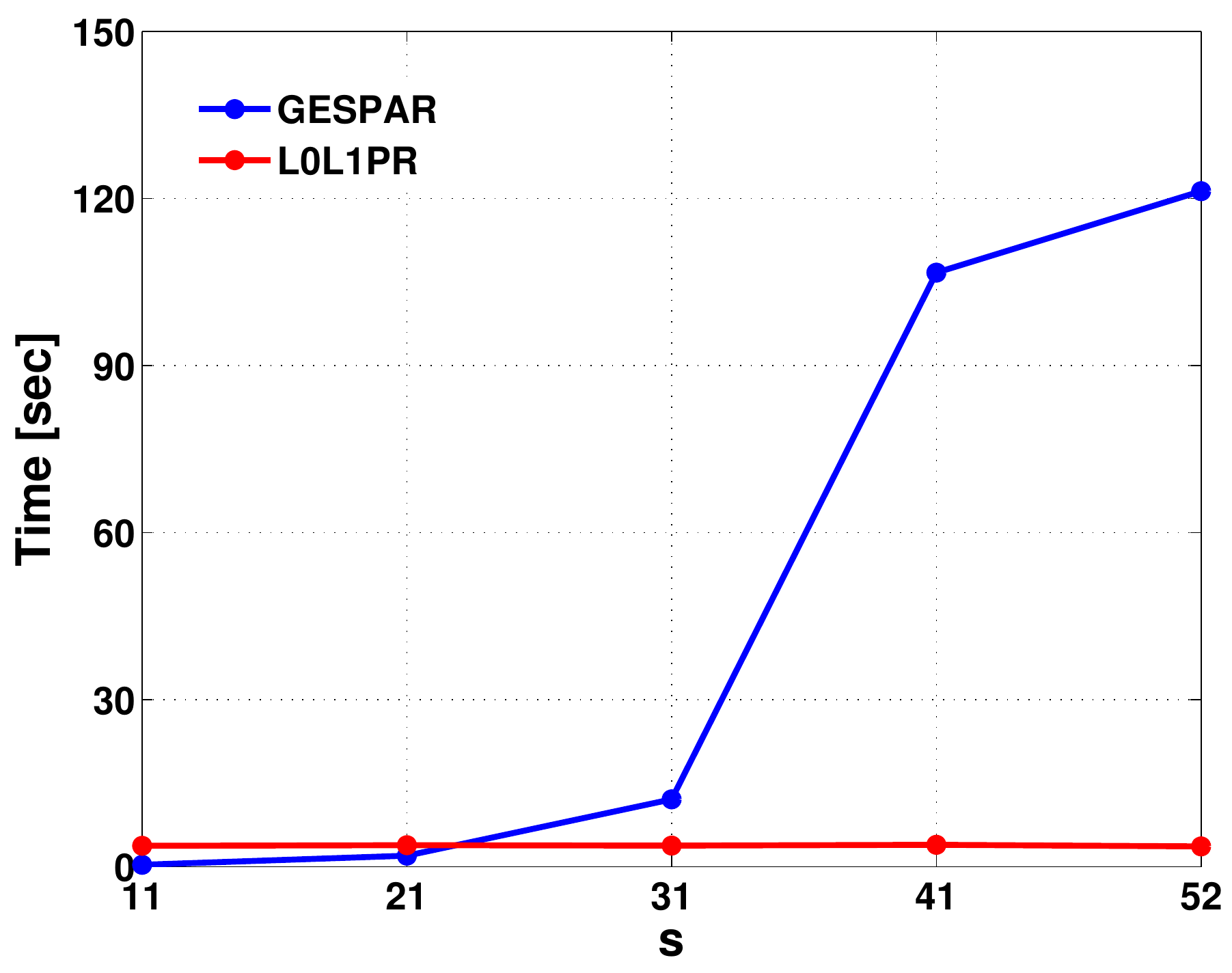}}
\subfigure[]{\includegraphics[width=.24\textwidth]{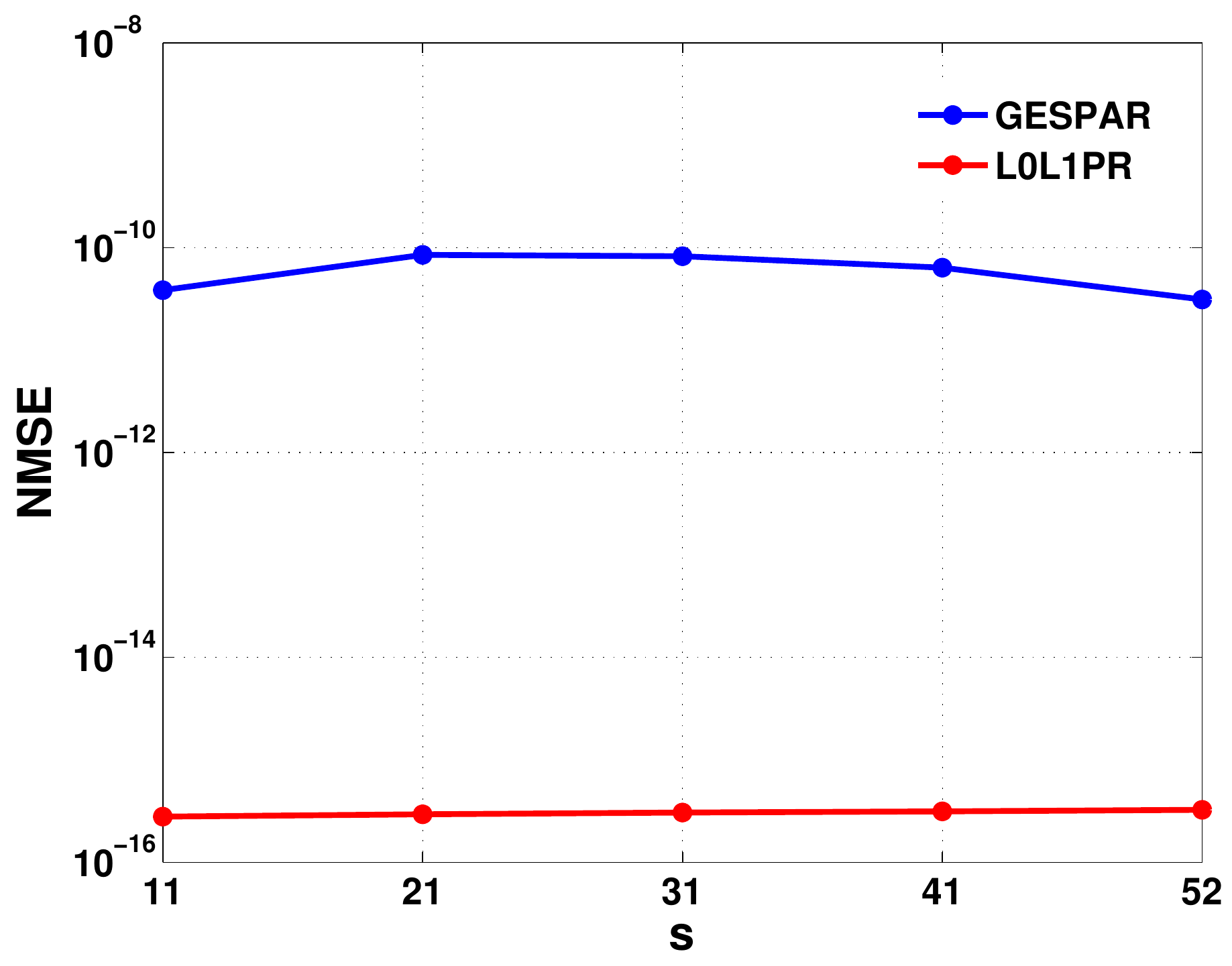}}
\caption{Computational time versus sparsity level. (a) Computational time in second. (b) Normalized mean squared error.} \label{time1}
\end{center}
\end{figure}

On the other hand, we fix the sparsity ratio $\mathrm{SR}=8\%$ w.r.t. signals of different lengths, \emph{i.e.,} $N=16,~32,~64,~\cdots,~1024$. Both computational time and NMSE are plotted in Fig. \ref{time2}, which also demonstrates that the proposed algorithms are quite suitable for large scale problems.

\begin{figure}[!htb]
\begin{center}
\subfigure[]{\includegraphics[width=.24\textwidth]{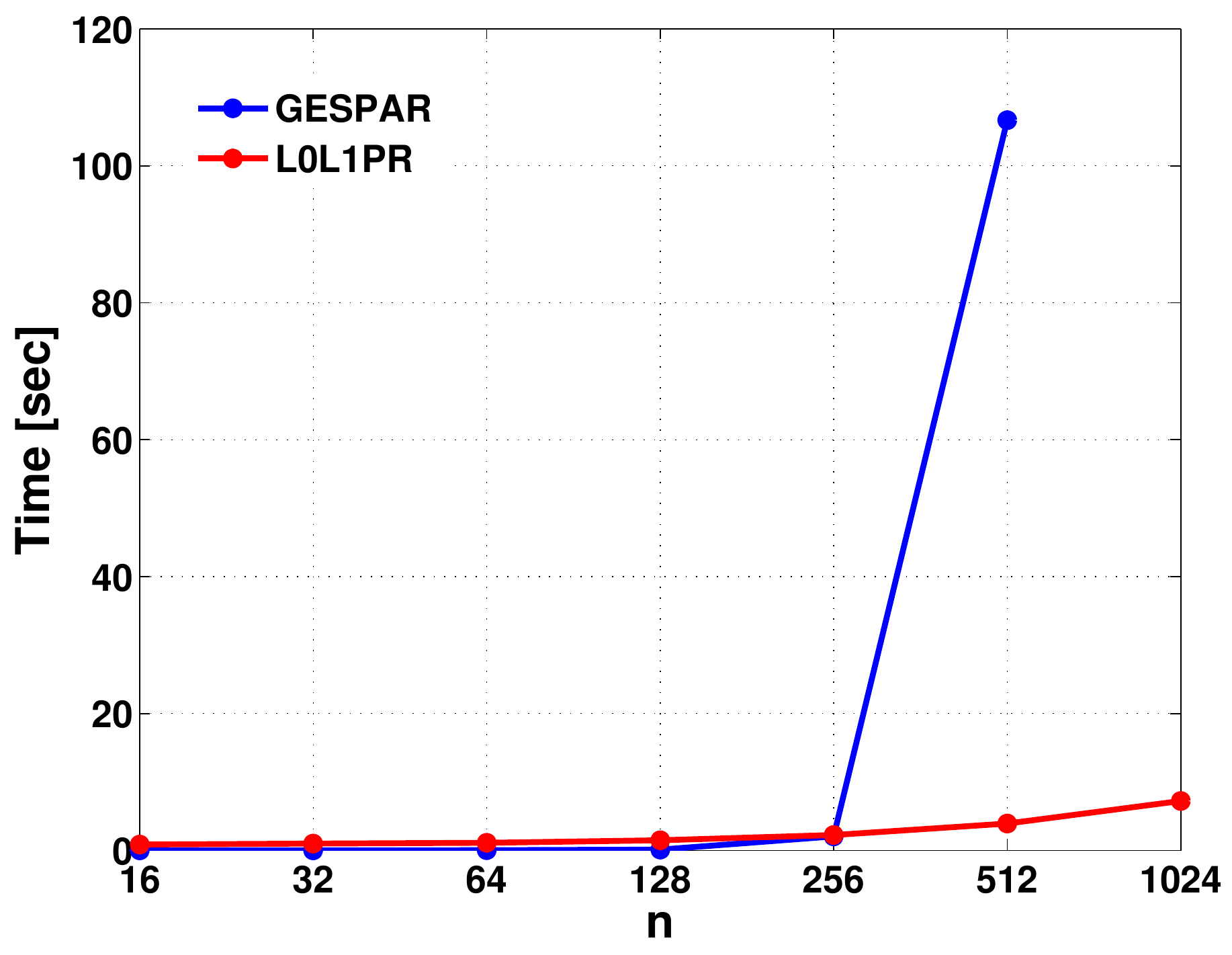}}
\subfigure[]{\includegraphics[width=.24\textwidth]{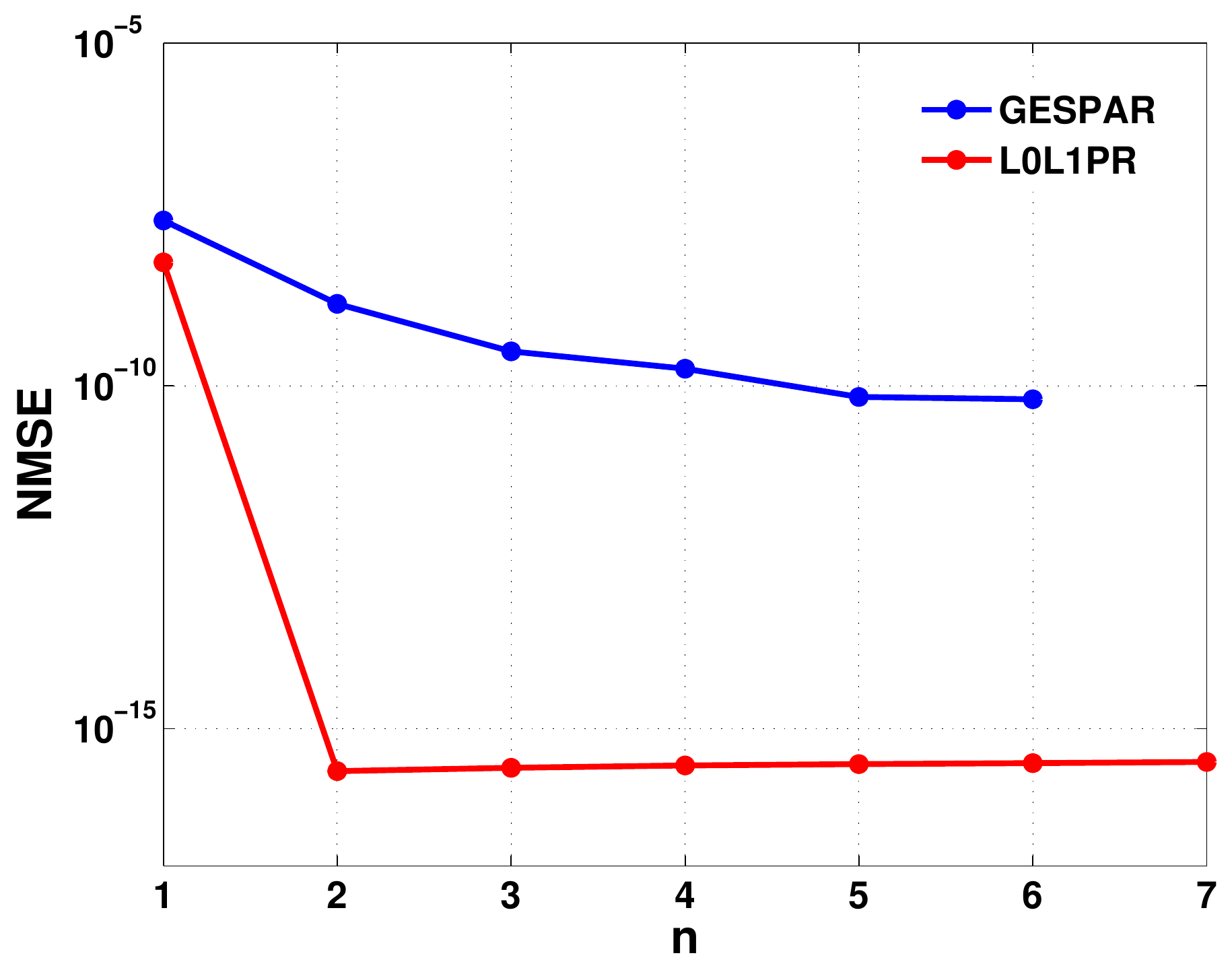}}
\caption{Computational time versus sparsity level. (a) Computational time in second. (b) Normalized mean squared error.} \label{time2}
\end{center}
\end{figure}



\subsection{Discussion on the Parameters}
The most important parameter in the proposed model is $\lambda$, which controls the sparsity of the recovery results. We analyze the impact of $\lambda$ for L0L2PR and L0L1PR by using different values of $\lambda$ on the same input signals and plot the recovery probability in Fig. \ref{lambda1}. It is shown that in order to gain higher recovery probability, a moderate  $\lambda$ is needed. Otherwise, the convergence speed and recovery probability both decrease.
\begin{figure}[!htb]
\begin{center}
\subfigure[]{\includegraphics[width=.24\textwidth]{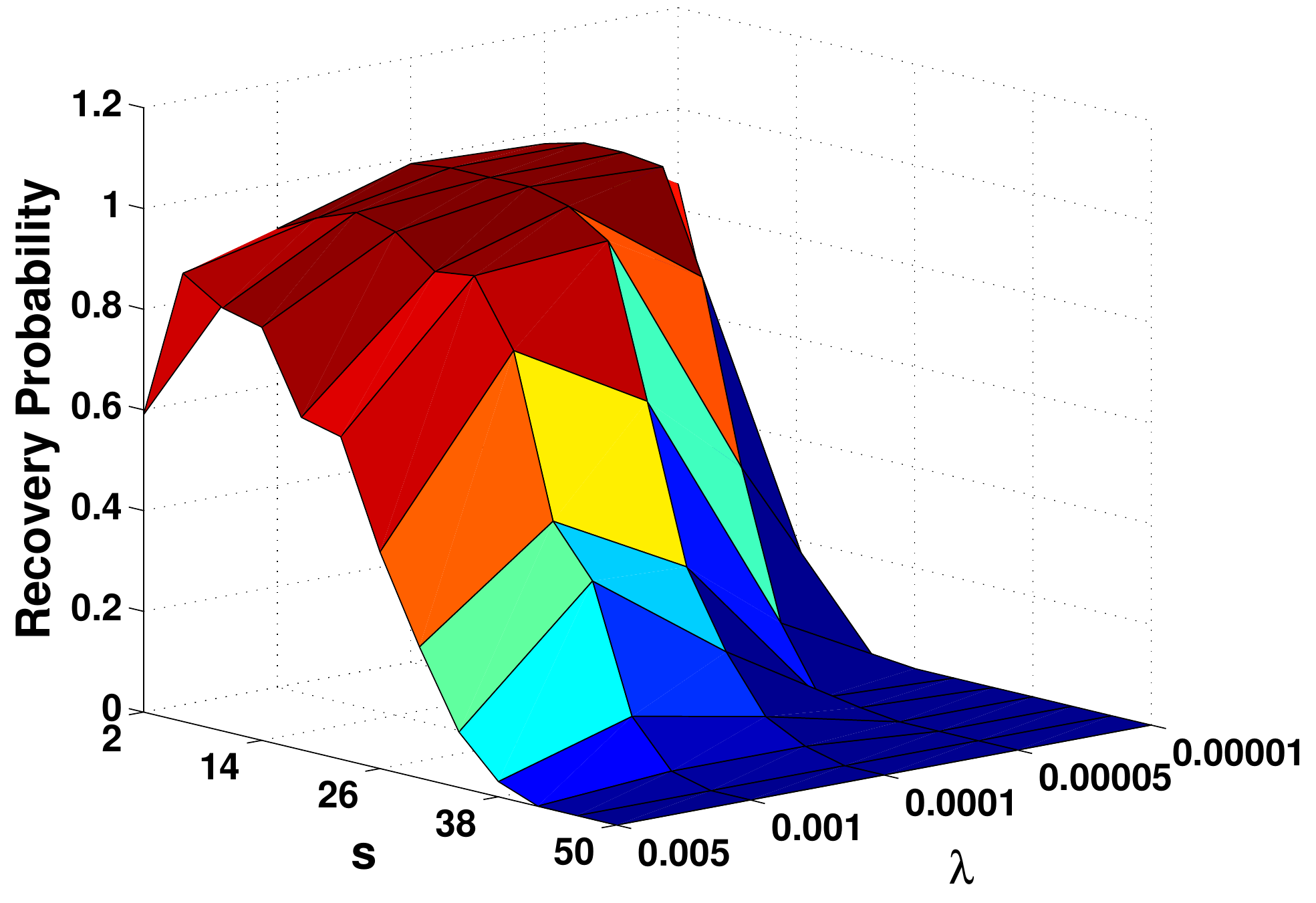}}
\subfigure[]{\includegraphics[width=.24\textwidth]{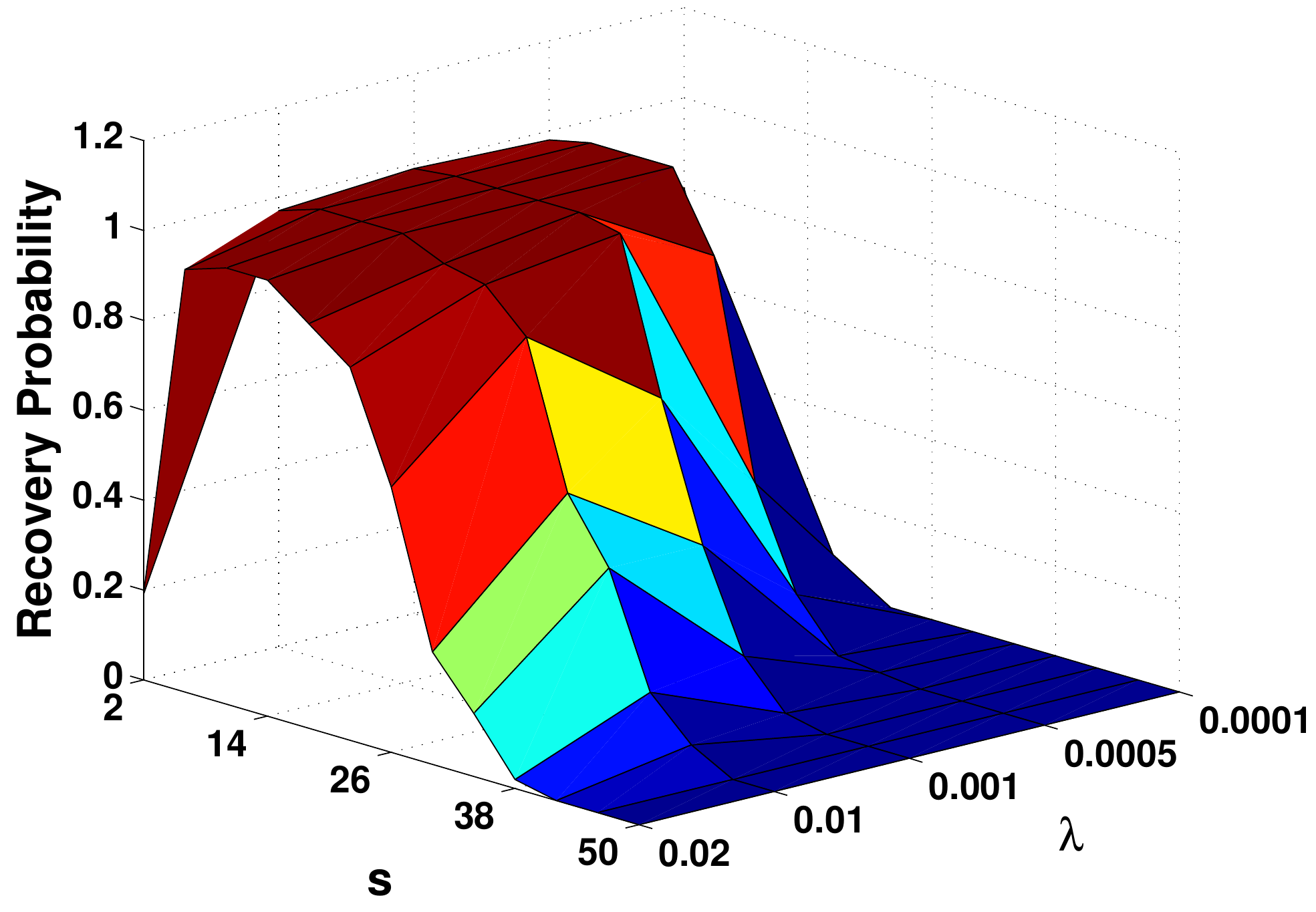}}
\caption{Regularization parameter $\lambda$ versus sparsity level. (a) L0L2PR; (b) L0L1PR} \label{lambda1}
\end{center}
\end{figure}


In Algorithm 1, we used the dynamic steps for better performance by setting $\rho>1$. In order to demonstrate the advantage of the dynamic schemes, we compare the recovery probability for different sparsity $s$ using L0L1PR with or without dynamic steps, which is plotted in Fig. \ref{figstep}. The parameters and stopping condition for L0L1PR with dynamic scheme are kept the same as the above tests, while we set $\lambda=1.0\times 10^{-2}, r_1^0=r_2^0=0.5$ for the fixed-step version algorithm with $\rho=1$.  Same number of iterations  are adopted for these two compared algorithms.  With fixed steps, for sparsity $s<20$, the recovery probability is only up to 80\%, while with dynamic step, it is about 100\%. When the sparsity decreases, fixed-step algorithm  hardly  recover the signals with $s\geq 26$ while dynamic-step algorithm can still recover the signals with $s\geq 40$ with low successful rate.  It is obvious that our proposed algorithm with dynamic steps  can efficiently improve the recovery probability compared that with fixed-step.
\begin{figure}[!htb]
\begin{center}
\subfigure[]{\includegraphics[width=.24\textwidth]{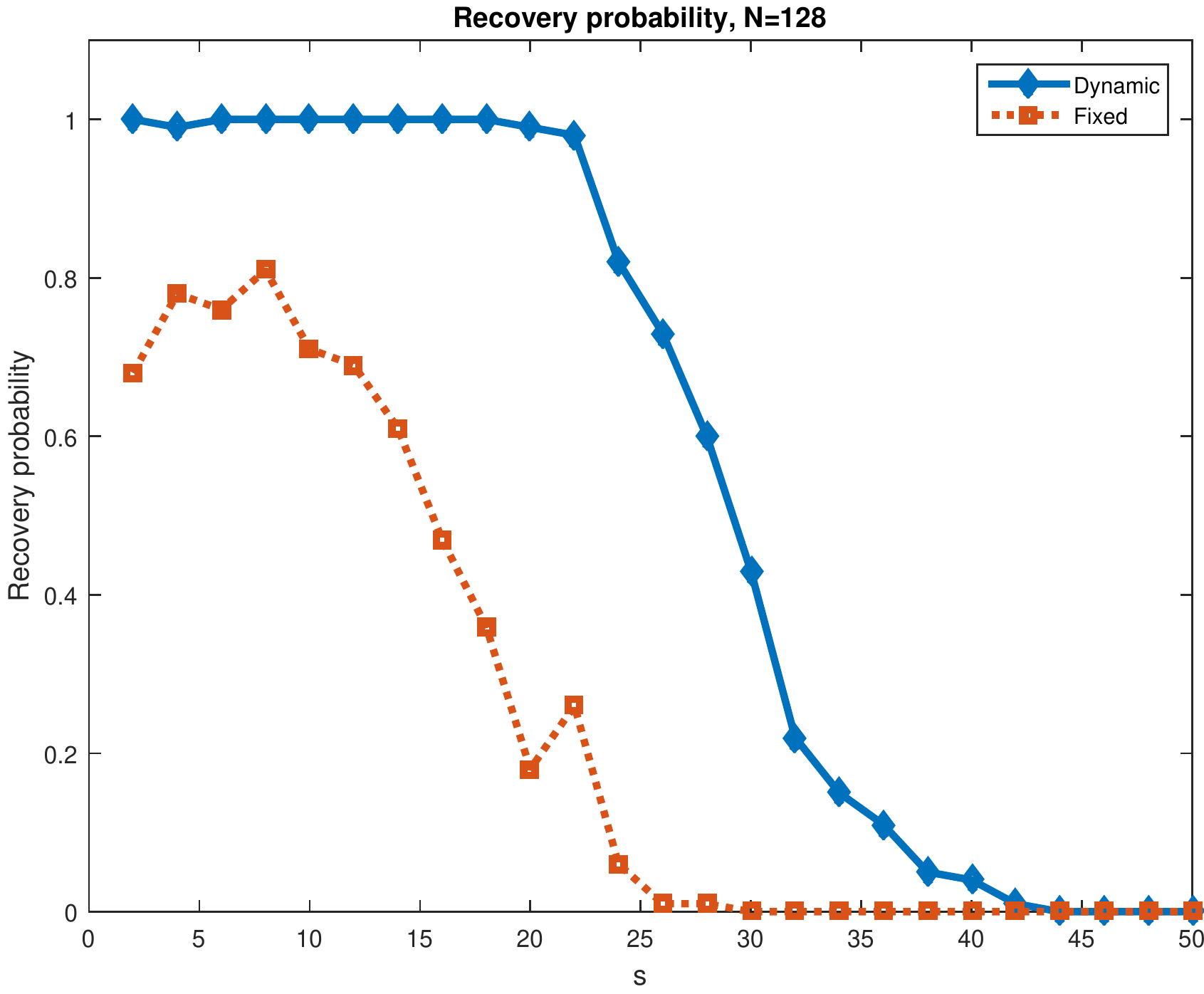}}
\caption{Performances comparison for dynamic steps versus fixed steps versus sparsity level.} \label{figstep}
\end{center}
\end{figure}

\section{Performances for coded diffraction pattern (CDP)}
In order to show that our proposed method can applied to a very general PR problem, numerical experiments for CDP are performed. The octanary CDP is explored, and specifically   each
element of $I_j$ in \eqref{cdp} takes a value randomly among the eight candidates, \emph{i.e.,} $\{\pm \sqrt{2}/2, \pm \sqrt{2}{\mathbf i}/2, \pm \sqrt{3},\pm \sqrt{3}{\mathbf i}\}$. Let $K=1,~2,~3,~4$, and $\lambda=2.0\times 10^{-2},~r_1=1.0\times 10^{-5},~r_2=1.0\times 10^{-6}$, $\tau=1.0005$. We use the L0L1PR as an example and plot the  recovery probability w.r.t. different sparse levels $s$ in Fig. \ref{cdpfig}. One can readily see that our proposed method can recovery the sparse signals with high probability. If oversampling by collecting multiple measurements by increasing $K$ , the proposed methods can handle more sophisticated signals.

\begin{figure}[!htb]
\begin{center}
\subfigure{\includegraphics[width=.24\textwidth]{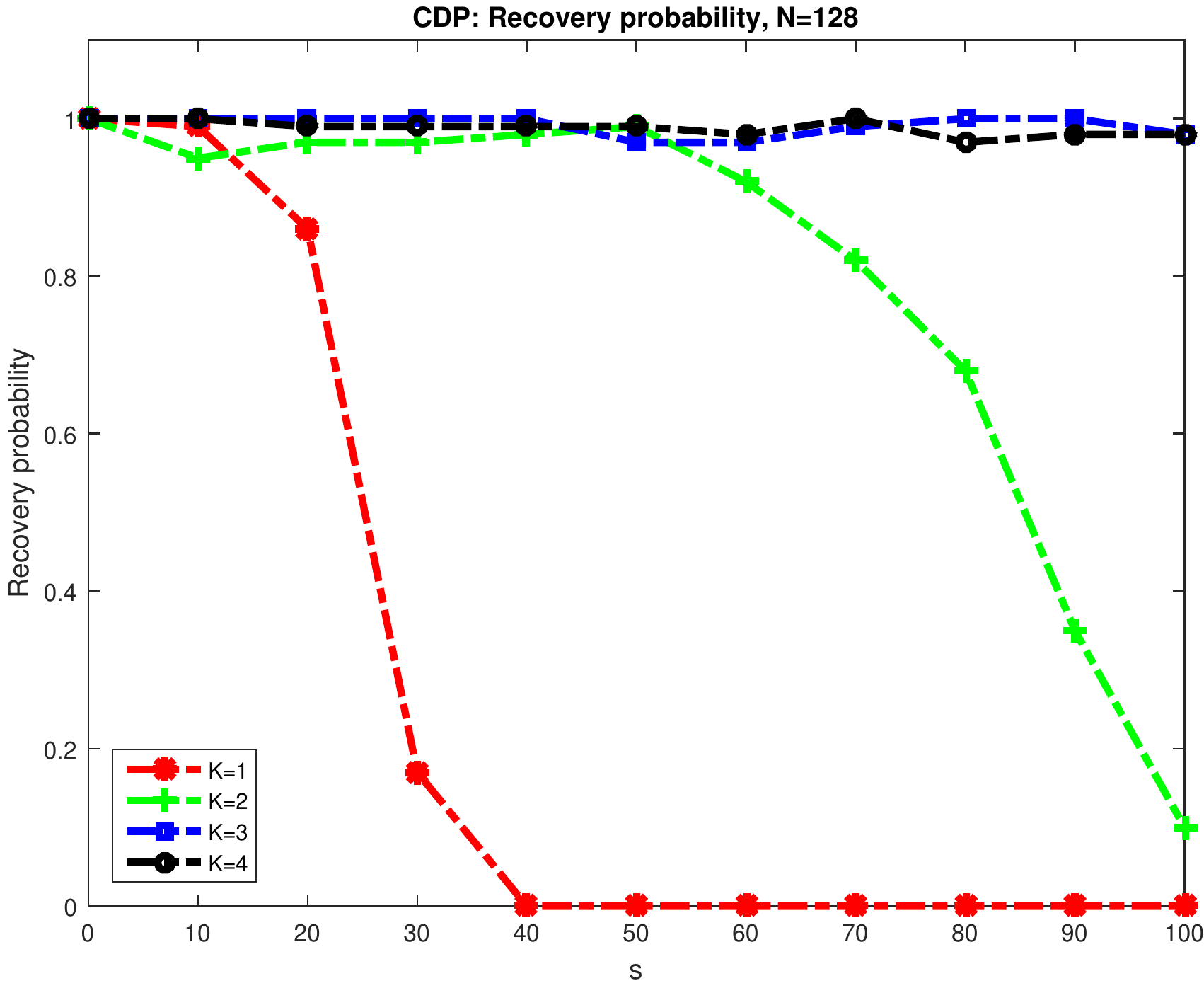}}
\caption{Recovery probability versus sparsity level  for CDP with $K=1,~2,~3,~4$.}
\label{cdpfig}
\end{center}
\end{figure}
\section{Conclusion}\label{sec6}
In this work, we study the general sparse PR problem based on variational approaches and  proposed novel variational PR models based on $L^0$  regularization and $L^p$ data fitting. By the fast implementation of ADMM to solve the proposed model, our methods can fast recover the sparse signals with  higher probability compared with the state of art algorithms. In the future, we will consider to generalize it to high dimension signals and further improve the convergence speed by domain decomposition methods \cite{chang2015convergence} and some multi-block coordinate splitting technique \cite{deng2013parallel}.

\section*{Acknowledgment}

The authors would like to thank Dr. Kishore Jaganathan to provide us the codes for SPR algorithm. Dr. Y. Duan was partially supported by the Ministry of Science and Technology of China (``863'' Program No.2015AA020101) and NSFC (NO.11526208).  Dr. Z.-F. Pang was partially supported by National Basic Research Program of China (973 Program No.2015CB856003) and NSFC (Nos.U1304610 and 11401170).   Dr. C. Wu was partially supported by NSFC (Nos.11301289 and 11531013). Dr. H. Chang was partially supported by China Scholarship Council (CSC) and NSFC (Nos.11426165 and 11501413).

\ifCLASSOPTIONcaptionsoff
  \newpage
\fi



%
\bibliographystyle{IEEEtran}

%
%
%

%




\end{document}